\documentclass{amsart}
\usepackage{amsmath}
\usepackage{amssymb}
\usepackage{amscd}
\usepackage{color}
\usepackage[all]{xy}

\newcommand{\Hom}{\operatorname{Hom}}
\newcommand{\im}{\operatorname{im}}
\newcommand{\supp}{\operatorname{supp}}

\newtheorem{theorem}{Theorem}[section]
\newtheorem{lemma}[theorem]{Lemma}
\newtheorem{corollary}[theorem]{Corollary}
\newtheorem{proposition}[theorem]{Proposition}
\newtheorem{definition}[theorem]{Definition}
\newtheorem{example}[theorem]{Example}
\newtheorem{remark}[theorem]{Remark}

\begin{document}

\title[On monoid graded semihereditary rings]
{On monoid graded semihereditary rings}
\author[H. F. G. Al-Kharsan, P. Sahandi \\and N. Shirmohammadi]
{Haneen Falah Ghalib Al-Kharsan, Parviz Sahandi \\and Nematollah Shirmohammadi}

\address{(Al-Kharsan) Department of Pure Mathematics, Faculty of Mathematics, Statistics and Computer Science,  University of Tabriz, Tabriz, Iran.} \email{almwswyhnyn90@gmail.com}
\address{(Sahandi) Department of Pure Mathematics, Faculty of Mathematics, Statistics and Computer Science,  University of Tabriz, Tabriz, Iran.} \email{sahandi@ipm.ir, sahandi@tabrizu.ac.ir}
\address{(Shirmohammadi) Department of Pure Mathematics, Faculty of Mathematics, Statistics and Computer Science,  University of Tabriz, Tabriz, Iran.} \email{ne.shirmohammadi@gmail.com, shirmohammadi@tabrizu.ac.ir}


\thanks{2020 Mathematics Subject Classification: 16W50, 16D40, 16D50, 16E60}
\thanks{Key Words and Phrases: Graded left hereditary ring, graded left semihereditary ring, graded-Pr\"{u}fer domain, graded injective module, graded flat module}

\begin{abstract}
In order to study graded left hereditary and left semihereditary rings graded by a cancelation monoid in terms of their modules, we need to revisit graded free, projective, injective, and flat modules and provide graded versions of specific results concerning these modules, like Baer's criterion on injectivity and Lazard's theorem on flatness. Then, among other things, we can give some characterizations of graded left hereditary and left semihereditary rings, in particular, of graded-Pr\"{u}fer and graded-Dedekind domains.
\end{abstract}

\maketitle

\section{Introduction}

After Kaplansky's work on Dedekind and Pr\"{u}fer domains \cite{ka52}, Cartan and Eilenberg defined the notion of hereditary and semihereditary rings in \cite{ce56}. Let $R$ be an associative ring with nonzero identity. Recall that $R$ is \emph{left hereditary} if every left ideal is a projective $R$-module. It is known that $R$ is left hereditary if and only if each submodule of a projective left $R$-module is projective if and only if each quotient of an injective left $R$-module is injective \cite[Page 14]{ce56}. A commutative hereditary integral domain is a Dedekind domain. It was proved that $R$ is a Dedekind domain if and only if every divisible $R$-module is injective \cite[Theorem 4.24]{ro09}.

Recall also that $R$ is \emph{left semihereditary} if every finitely generated left ideal is a projective $R$-module. It is known that $R$ is left semihereditary if and only if $R$ is left coherent, and each submodule of a flat left $R$-module is flat if and only if each torsion-free right $R$-module is flat \cite[Theorem 4.32]{ro09}. A commutative semihereditary integral domain is a Pr\"{u}fer domain. It is shown that $R$ is a Pr\"{u}fer domain if and only if every finitely generated torsion-free $R$-module is projective \cite[Theorem 4.34]{ro09}.

The graded analogue of Dedekind and Pr\"{u}fer domains was defined and studied by several authors (see \cite{no82, ac13, acz16, s16}). This paper aims to define and study the graded analogues of left hereditary and semihereditary rings. Let $\Gamma$ be a cancelation monoid, and $R=\bigoplus_{\alpha \in \Gamma}R_{\alpha}$ be a $\Gamma$-graded ring. To this purpose, one needs to study $\Gamma$-graded free, projective, injective, and flat modules. These modules were extensively studied in case where $\Gamma=\mathbb{Z}$ in \cite{ff74, bh98}, and in case where $\Gamma$ is a group in \cite{no04}. The first two of them were investigated in case where $\Gamma$ is a cancelation monoid in \cite{li12}. Then, in this paper, we examine $\Gamma$-graded injective and flat modules where $\Gamma$ is a cancelation monoid.

The organization of this paper is as follows. In the remainder of the introduction and Section 2, we recall some preliminaries on graded rings and modules. In Section 3, after reviewing some results on the graded free and projective modules, we investigate the graded divisible, injective, and flat modules. We provide a graded analogue of Baer's Theorem and show that every graded injective module is graded divisible and every $R$-module admits a graded injective envelope. Then, we define and study graded flat modules and extend Lazard's Theorem to the monoid-graded case. More precisely, it is shown that any graded flat module is a direct limit of finitely generated graded free modules. In Section 4, we study graded left hereditary rings. A $\Gamma$-graded ring $R=\bigoplus_{\alpha \in \Gamma}R_{\alpha}$ is called \emph{graded left hereditary} if every left homogeneous ideal is projective. Then, among other things, we prove the graded version of the Cartan-Eilenberg Theorem: $R$ is graded left hereditary if and only if every graded submodule of a graded projective $R$-module is graded projective if and only if every quotient of a graded injective $R$-module is graded injective. Then we show that a commutative graded integral domain is graded-Dedekind if and only if every graded-divisible left $R$-module is graded-injective. Finally, in Section 5, we study graded left semihereditary rings. A $\Gamma$-graded ring $R$ is called \emph{graded left semihereditary} if every finitely generated left homogeneous ideal is projective. Among other things, it is shown that $R$ is graded left semihereditary if and only if every finitely generated graded submodule of a graded projective left $R$-module is graded projective if and only if $R$ is graded left coherent and every homogeneous left ideal is graded flat. It is also shown that a commutative graded integral domain $R$ is a graded-Pr\"{u}fer domain if and only if every finitely generated torsion-free graded $R$-module is graded projective.

\subsection{Graded Rings and modules}

Let $\Gamma$ be a cancelation monoid, that is, $\Gamma$ is a semigroup with a neutral element $\varepsilon$, not necessarily commutative, and
$\Gamma$ satisfies the (left and right) cancelation law: $\alpha\alpha_1=\alpha\alpha_2$ implies $\alpha_1=\alpha_2$, and $\alpha_1\alpha=\alpha_2\alpha$ implies $\alpha_1=\alpha_2$ for all $\alpha,\alpha_1,\alpha_2\in\Gamma$. By a \emph{$\Gamma$-graded ring} $R$, we mean an associative ring with the multiplicative identity 1, which has a direct sum decomposition $R=\bigoplus_{\alpha \in \Gamma}R_{\alpha}$, where each $R_{\alpha}$ is an additive subgroup of $R$ such that $R_{\alpha}R_{\beta}\subseteq R_{\alpha\beta}$ for all $\alpha,\beta\in\Gamma$. For each $\alpha\in\Gamma$, we call $0\neq a\in R_{\alpha}$ a homogeneous element of degree $\alpha$ and write $\deg(a)=\alpha$. For the basics about graded rings and
graded modules we used in this paper, one may refer to \cite{no04}, though it mainly deals with group $G$-graded rings.

Let $R=\bigoplus_{\alpha \in \Gamma}R_{\alpha}$ be a $\Gamma$-graded ring as above. By the definition, it is clear that the degree-$\varepsilon$ part $R_{\varepsilon}$ is a subring of $R$, and by using the cancelation law on $\Gamma$, it is easy to check that $1\in R_{\varepsilon}$. Let $I$ be a left (resp. right) ideal of $R$. If $I$ is generated by homogeneous elements, then we say that $I$ is a \emph{homogeneous left (resp. right) ideal} of $R$. By using the cancelation law on $\Gamma$, it is straightforward to check that $I$ is a homogeneous left (resp. right) ideal of $R$ if and only if $I=\bigoplus_{\alpha \in \Gamma}(I\cap R_{\alpha})$ if and only if $a=\Sigma_{i}a_{\alpha_i}\in I$ with $a_{\alpha_i}\in R_{\alpha_i}$ implies $a_{\alpha_i}\in I$ for all $i$ if and only if $R/I=\bigoplus_{\alpha \in \Gamma}(R_{\alpha}+I)/I$. If each graded left ideal of $R$ is finitely generated, then $R$ is called a \emph{left gr-Noetherian ring}.

Let $R=\bigoplus_{\alpha \in \Gamma}R_{\alpha}$ be a $\Gamma$-graded ring. By a $\Gamma$-graded left $R$-module $M$, we mean a left $R$-module which has a direct sum
decomposition $M=\bigoplus_{\alpha \in \Gamma}M_{\alpha}$, where each $M_{\alpha}$ is an additive subgroup of $M$, such that $R_{\alpha}M_{\beta}\subseteq M_{\alpha\beta}$ for all $\alpha,\beta\in\Gamma$. Similarly, one can define a $\Gamma$-graded right $R$-module. For each $\alpha\in\Gamma$, we call $0\neq x\in M_{\alpha}$ a \emph{homogeneous element} of degree $\alpha$ and write $\deg(x)=\alpha$. By \cite[Lemma 1.1]{li12}, if $\{x_i\}_{i\in J}$ is a homogeneous generating set of $M$ with $\deg(x_i)=\alpha_i$, $i\in J$, then
$$M_{\alpha}=\sum_{i\in J\text{, }\alpha'_i\alpha_i=\alpha}R_{\alpha'_i}x_i.$$
Let $N$ be a submodule of $M$. If $N$ is generated by some homogeneous elements, then we say that \emph{$N$ is a graded submodule of $M$}. By using the cancelation law on $\Gamma$, it is straightforward to check that $N$ is a graded submodule of $M$ if and only if $N=\bigoplus_{\alpha \in \Gamma}(N\cap M_{\alpha})$ if and only if $x=\Sigma_{i}x_{\alpha_i}\in N$ with $x_{\alpha_i}\in M_{\alpha_i}$ implies $x_{\alpha_i}\in N$ for all $i$ if and only if $M/N=\bigoplus_{\alpha \in \Gamma}(M_{\alpha}+N)/N$. We let $H(M)=\bigcup_{\alpha \in \Gamma}M_{\alpha}$ be the set of homogeneous elements of $M$.

Let $M=\bigoplus_{\alpha \in \Gamma}M_{\alpha}$ and $N=\bigoplus_{\alpha \in \Gamma}N_{\alpha}$ be $\Gamma$-graded left $R$-modules. If $\varphi:M\to N$ is an $R$-module homomorphism such that $\varphi(M_{\alpha})\subseteq N_{\alpha}$ for all $\alpha\in\Gamma$, then we call $\varphi$ a \emph{homogeneous homomorphism}. In the case that $\varphi:M\to N$ is a homogeneous homomorphism, it is easy to see that $\ker(\varphi)$ and $\im(\varphi)$ are graded left submodules of $M$ and $N$, respectively.

\section{Preliminaries}

Let $\Gamma$ be a cancelation monoid with neutral element $\varepsilon$, and $R=\bigoplus_{\alpha \in \Gamma}R_{\alpha}$ a $\Gamma$-graded ring. In this section, we recall some preliminaries of $\Gamma$-graded $R$-modules.

Let $\{M_i,\varphi_{ij}\}_{i,j\in I}$ be a direct system of $\Gamma$-graded left $R$-modules, i.e., $I$ is a directed partially ordered set and, for $i\leq j$, $\varphi_{ij}:M_i\to M_j$ is a homogeneous homomorphism  which is compatible with the ordering. Then $M=\varinjlim_{i\in I}M_i$ is a $\Gamma$-graded left $R$-module
with homogeneous components $M_{\alpha}=\varinjlim_{i\in I}M_{i\alpha}$. For a detailed construction of
such direct limits see \cite[II, Section 11.3, Remark 3]{b98}. In particular, let $\{M_i\mid i\in I\}$ be $\Gamma$-graded left $R$-modules. Then $\bigoplus_{i\in I}M_i$ has a natural $\Gamma$-graded left $R$-module given by $(\bigoplus_{i\in I}M_i)_{\alpha}=\bigoplus_{i\in I}M_{i\alpha}$, for all $\alpha\in\Gamma$.

Now, let $\{N_i,\psi_{ij}\}_{i,j\in I}$ be another direct system of $\Gamma$-graded left $R$-modules and set $N=\varinjlim_{i\in I}N_i$. We say that $\{\tau_i\}:\{M_i,\varphi_{ij}\}\to \{N_i,\psi_{ij}\}$ is a \emph{morphism of direct systems} if $\tau_i:M_i\to N_i$, $i\in I$, is a homogeneous homomorphism, such that the following diagram is commutative for all $i\leq j$:
\begin{displaymath}
\xymatrix{ M_i \ar[r]^{\tau_i} \ar[d]_{\varphi_{ij}} & N_i \ar[d]^{\psi_{ij}} \\
M_j \ar[r]_{\tau_j} & N_j. }
\end{displaymath}
In this case, there exists a homomorphism $\tau:M\to N$ such that the diagram
\begin{displaymath}
\xymatrix{ M_i \ar[r]^{\tau_i} \ar[d]_{\varphi_i} & N_i \ar[d]^{\psi_i} \\
M \ar[r]_{\tau} & N, }
\end{displaymath}
commutes for all $i\in I$, where $\varphi_i, \psi_i$ are natural homogeneous homomorphisms. It can be seen that $\tau$ is homogeneous.

Let $M$, $N$ be $\Gamma$-graded left $R$-modules and $\alpha\in\Gamma$. A \emph{homogeneous $R$-module homomorphism of degree $\alpha$} is an $R$-module homomorphism $f:M\to N$, such that $f(M_{\beta})\subseteq N_{\beta\alpha}$ for any $\beta\in\Gamma$. Let $\Hom_R(M,N)$ be the group of all $R$-module homomorphism from $M$ to $N$, and denote by $\Hom_{\alpha}(M,N)$ the subgroup of $\Hom_R(M,N)$ consisting of all homogeneous $R$-module homomorphisms of degree $\alpha$. By using the cancelation law on $\Gamma$, it is easy to see that $\Hom_{\alpha}(M,N)$, $\alpha\in\Gamma$, constitutes a direct sum. We put $$\sideset{^*}{_{R}}{\Hom}(M,N):=\bigoplus_{\alpha\in\Gamma}\Hom_{\alpha}(M,N).$$ Again, by using the cancelation law on $\Gamma$, one notices that the functors $\sideset{^*}{_{R}}{\Hom}(-,N)$ and $\sideset{^*}{_{R}}{\Hom}(M,-)$ are left exact on the category of $\Gamma$-graded left $R$-modules. In general, $\sideset{^*}{_{R}}{\Hom}(M,N)$ may not equal to $\Hom_R(M,N)$, but under certain conditions we have $\sideset{^*}{_{R}}{\Hom}(M,N)=\Hom_{R}(M,N)$, for instance, for the case that $\Gamma$ is a group, if $M$ is finitely generated (\cite[Corollary 2.4.4]{no04}), or if both $M$ and $N$ have finite support (\cite[Corollary 2.4.5]{no04}), in particular in the case where the grading group $\Gamma$ is finite.

In a similar manner, for $\Gamma$-graded rings $R$ and $S$, we can consider the $\Gamma$-graded $R-S$-bimodule $N$. That is, $N$ is a $R-S$-bimodule and additionally $N=\bigoplus_{\alpha\in\Gamma}N_{\alpha}$ is a $\Gamma$-graded left $R$-module and a $\Gamma$-graded right $S$-module, i.e., $R_{\alpha}N_{\gamma}S_{\beta}\subseteq N_{\alpha\gamma\beta}$, where $\alpha,\gamma,\beta\in\Gamma$. Assume now that $N$ is a graded $R-S$-bimodule, then it is clear that $\sideset{^*}{_{R}}{\Hom}(M,N)$ has a natural structure of a $\Gamma$-graded right $S$-module, with homogeneous components $\Hom_{\alpha}(M,N)$, $\alpha\in\Gamma$.

Let $M$ be a $\Gamma$-graded right $R$-module and $N$ be a $\Gamma$-graded left $R$-module. We will observe that the tensor product $M\otimes_RN$ has a natural $\Gamma$-graded $\mathbb{Z}$-module structure (here $\mathbb{Z}$ has trivial $\Gamma$-graded structure). Since each of $M_{\alpha}$, $\alpha\in\Gamma$, is a right $R_{\varepsilon}$-module and similarly $N_{\alpha}$, $\alpha\in\Gamma$, is a left $R_{\varepsilon}$-module, then $M\otimes_{R_{\varepsilon}}N$ can be decomposed as a direct sum $$M\otimes_{R_{\varepsilon}}N=\bigoplus_{\alpha\in\Gamma}(M\otimes N)_{\alpha},$$ where $$(M\otimes N)_{\alpha}=\{\sum_im_i\otimes n_i\mid m_i\in H(M), n_i\in H(N), \deg(m_i)+\deg(n_i)=\alpha\}.$$ It now follows that $M\otimes_RN=(M\otimes_{R_{\varepsilon}}N)/J$, where J is a subgroup of $M\otimes_{R_{\varepsilon}}N$ generated by the homogeneous elements $$\{mr\otimes n-m\otimes rn\mid m\in H(M), n\in H(N), r\in H(R)\}.$$ This shows that $M\otimes_RN$ is also a $\Gamma$-graded $\mathbb{Z}$-module.

\begin{lemma}\label{lim}
Let $I$ be a directed partially ordered set, and $\{M_i,\varphi_{ij}\}$, $\{N_i,\psi_{ij}\}$, and $\{L_i,\theta_{ij}\}$ be direct systems of $\Gamma$-graded left $R$-modules.
\begin{enumerate}
  \item Assume that there are morphisms of direct systems over $I$ $$\{M_i,\varphi_{ij}\}\to\{N_i,\psi_{ij}\}\to\{L_i,\theta_{ij}\}$$
such that the sequence $M_i\to N_i\to L_i$ is exact for each $i\in I$. Then there is an exact sequence of $\Gamma$-graded left $R$-modules with homogeneous homomorphisms
$$\underset{i\in I}{\varinjlim}M_i\to\underset{i\in I}{\varinjlim}N_i\to\underset{i\in I}{\varinjlim}L_i.$$
  \item Assume that $A$ is a $\Gamma$-graded right $R$-module. Then there is a homogeneous isomorphism $$A\otimes_R(\underset{i\in I}{\varinjlim}M_i)\cong \underset{i\in I}{\varinjlim}(A\otimes_RM_i).$$
  \item Assume that $J$ is a cofinal subset in $I$. Then $J$ is directed and there is a homogeneous isomorphism $\underset{j\in J}{\varinjlim}M_j\cong\underset{i\in I}{\varinjlim}M_i$.
\end{enumerate}
\end{lemma}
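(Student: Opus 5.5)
The plan is to reduce each part to the corresponding ungraded statement about direct limits of modules, applied degree by degree, and then to check that all the maps involved are homogeneous. Throughout I use the concrete description of the direct limit recalled in Section 2. Write $M=\varinjlim M_i$ with homogeneous components $M_\alpha=\varinjlim_{i} M_{i\alpha}$. Every element of $M$ has the form $\varphi_i(x)$ for some $i$ and some $x\in M_i$. Moreover, $\varphi_i(x)=0$ if and only if $\varphi_{ij}(x)=0$ for some $j\geq i$.

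For (1), let $f_i:M_i\to N_i$ and $g_i:N_i\to L_i$ be the given homogeneous maps, and let $f:M\to N$, $g:N\to L$ be the induced maps. As recalled in Section 2, these are homogeneous. Since $g_if_i=0$ for every $i$, the universal property gives $gf=0$. For the reverse inclusion, take $y\in\ker g$. Because $g$ is homogeneous and $\ker g$ is a graded submodule, it suffices to treat homogeneous $y$ of degree $\alpha$.
\begin{enumerate}
\item Write $y=\psi_i(y_i)$ with $y_i\in N_{i\alpha}$.
\item From $g(y)=0$ we get $\theta_i(g_i(y_i))=0$, so $\theta_{ij}(g_i(y_i))=0$ for some $j\geq i$.
\item This gives $g_j(\psi_{ij}(y_i))=0$, so by exactness at $N_j$ there is $x\in M_j$ with $f_j(x)=\psi_{ij}(y_i)$.
\item Hence $f(\varphi_j(x))=y$.
\end{enumerate}
Alternatively, one can simply invoke the exactness of $\varinjlim$ for ordinary modules and note that the maps are homogeneous.

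For (2), I would construct maps in both directions and check that they are mutually inverse.
\begin{itemize}
\item The homogeneous maps $1\otimes\varphi_i:A\otimes_R M_i\to A\otimes_R M$ are compatible with the transition maps, so they induce a map $\sigma:\varinjlim(A\otimes_R M_i)\to A\otimes_R M$. This map is homogeneous because each $1\otimes\varphi_i$ preserves degrees.
\item In the other direction, the rule $(a,\varphi_i(x))\mapsto [a\otimes x]_i$ is well defined and $R$-balanced, by the explicit description of $M$. It therefore induces a map $\tau$ that sends homogeneous elements $a\otimes\varphi_i(x)$ of degree $\deg a\,\deg x$ to elements of the same degree.
\item Checking $\sigma\tau=1$ and $\tau\sigma=1$ on generators is routine.
\end{itemize}
This is the standard proof. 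The only extra point is that the grading on the tensor product, defined in Section 2 via the quotient of $A\otimes_{R_\varepsilon}M$, is respected by both maps.

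For (3), directedness of $J$ follows from cofinality: given $j,j'\in J$, pick $i\geq j,j'$ in $I$, then pick $j''\in J$ with $j''\geq i$. The natural map $\varinjlim_J M_j\to\varinjlim_I M_i$, $[x]_j\mapsto[x]_j$, is homogeneous.
\begin{itemize}
\item \emph{Surjective:} any $[x]_i$ equals $[\varphi_{ij}(x)]_j$ for some $j\in J$ with $j\geq i$.
\item \emph{Injective:} if $[x]_j=0$ in the $I$-limit, then $\varphi_{jk}(x)=0$ for some $k\in I$. Choosing $j'\in J$ with $j'\geq k$ gives $\varphi_{jj'}(x)=0$, so $[x]_j=0$ already in the $J$-limit.
\end{itemize}

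I expect the main obstacle to be in (2): showing that $\tau$ is well defined and that the isomorphism respects the grading. The subtle point is that $A\otimes_R M$ is graded only as a $\mathbb{Z}$-module through the quotient by $J$, so I would verify homogeneity on the generating homogeneous tensors. The remaining steps are routine.
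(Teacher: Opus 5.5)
Your proposal is correct and follows the same route as the paper: the paper cites the ungraded results (exactness of direct limits, and $\otimes$ commuting with $\varinjlim$), leaves (3) as an exercise, and relies on the induced maps being homogeneous; you write out the standard element-wise arguments that this citation stands for. One small simplification in (2): you need not check separately that $\tau$ is homogeneous, because the inverse of a bijective homogeneous map between graded groups is automatically homogeneous.
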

\begin{proof}
For (1), it remains to prove the exactness, which in turn follows by \cite[Proposition 8.9]{so00}, and (2) follows as in the ungraded case \cite[Corollary 8.8]{so00}, while (3) is an easy exercise.
\end{proof}

\section{Graded projective, injective, and flat modules}

Let $\Gamma$ be a cancelation monoid with neutral element $\varepsilon$ and $R=\bigoplus_{\alpha \in \Gamma}R_{\alpha}$ a $\Gamma$-graded ring. In this section, we investigate graded projective, injective, and flat modules, and we compare each of them with the usual ungraded one.

\subsection{Graded-projective modules}

Recall that a $\Gamma$-graded left $R$-module $F$ is called \emph{graded free} (for short, \emph{gr-free}) if it has an $R$-basis consisting of homogeneous elements. One notices that there is a graded and free modules that is not gr-free \cite[Page 21]{no04}; hence
gr-free is a stronger property than ``graded plus free''. Given any free $R$-module $F=\bigoplus_{i\in J}Re_i$ and an arbitrarily chosen family $\{\alpha_i\}_{i\in J}$ of elements of $\Gamma$, one can give a $\Gamma$-graduation on $F$ such that $\{e_i\}_{i\in J}$ will be a homogeneous basis with $\deg(e_i)=\alpha_i$. This shows that if $M$ is a $\Gamma$-graded left $R$-module with homogeneous generating set $\{x_i\}_{i\in J}$, and the gr-free $R$-module $F=\bigoplus_{i\in J}Re_i$, with $\deg(e_i)=\deg(x_i)$, then the map $\varphi:F\to M$ defined by $\varphi(e_i)=x_i$ is a homogeneous epimorphism. Hence, every nonzero $\Gamma$-graded left $R$-module is a graded homomorphic image of some gr-free $R$-module (see \cite[Page 2698]{li12} for details).

As in \cite{li12}, a graded left $R$-module $P$ is called a \emph{graded projective module} (for short a \emph{gr-projective module}) if for any diagram of $\Gamma$-graded left $R$-modules and homogeneous homomorphisms
\begin{displaymath}
\xymatrix{  &
P \ar[d]^{f} \ar@{-->}[ld]_{h} \\
M \ar[r]_{g} & N \ar[r] &0. }
\end{displaymath}
there is a homogeneous homomorphism $h:P\to M$ with $gh=f$.

\begin{proposition}\label{proj}
Assume that $\Gamma$ is a cancelation monoid, $R=\bigoplus_{\alpha \in \Gamma}R_{\alpha}$ is a $\Gamma$-graded ring and $P$ is a $\Gamma$-graded left $R$-module. Then the following are equivalent:
\begin{enumerate}
  \item $P$ is projective.
  \item $P$ is gr-projective.
  \item $\sideset{^*}{_{R}}{\Hom}(P,-)$ is an exact functor in the category of $\Gamma$-graded left $R$-modules.
  \item Every short exact sequence of $\Gamma$-graded left $R$-module and homogeneous homomorphisms $0\to L\stackrel{f}\to M\stackrel{g}\to P\to 0$ splits via a homogeneous homomorphism.
  \item $P$ is a graded direct summand of a gr-free $\Gamma$-graded left $R$-module.
\end{enumerate}
\end{proposition}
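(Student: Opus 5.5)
We prove the cycle of implications $(2)\Rightarrow(4)\Rightarrow(5)\Rightarrow(1)\Rightarrow(2)$, together with $(2)\Leftrightarrow(3)$. The point is to pass between homogeneous maps and arbitrary $R$-linear maps by a degree-$\varepsilon$ projection.

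For $(2)\Leftrightarrow(3)$, recall that $\sideset{^*}{_{R}}{\Hom}(P,-)$ is left exact, so only preservation of epimorphisms is at stake. Let $g:M\to N$ be a homogeneous epimorphism. Condition (3) gives surjectivity on the component $\Hom_\varepsilon$, which is exactly (2). Conversely, let $f\in\Hom_\alpha(P,N)$. We do not shift the grading of $P$, since shifts may fail to exist over a monoid. Instead we exploit the fact that $g(M_{\beta\alpha})=N_{\beta\alpha}$, because $g$ is homogeneous and onto. Choose a homogeneous generating set $\{x_j\}$ of $P$, or more cleanly the gr-free cover $F=\bigoplus Re_j\to P$ with $\deg e_j=\deg x_j$, and lift each $f(x_j)\in N_{\deg(x_j)\alpha}$ to $m_j\in M_{\deg(x_j)\alpha}$. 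This defines a degree-$\alpha$ map $F\to M$. To descend it to $P$, first split $F\to P$ using (2) (this is the step (2)$\Rightarrow$(4)), and then compose with the splitting.

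For $(2)\Rightarrow(4)$, apply (2) to the identity of $P$ and the epimorphism $g$. This yields a homogeneous $h$ with $gh=1_P$, so the sequence splits homogeneously. For $(4)\Rightarrow(5)$, take the gr-free cover $F\to P$ constructed before the proposition. Its kernel is graded, and the homogeneous splitting makes $P$ a graded direct summand of $F$. For $(5)\Rightarrow(1)$, note that a gr-free module is free, and a direct summand of a free module is projective.

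The real content is $(1)\Rightarrow(2)$; I expect this to be the main obstacle. Take homogeneous maps $g:M\to N$ (onto) and $f:P\to N$. Projectivity gives an $R$-linear $h:P\to M$ with $gh=f$, but $h$ need not be homogeneous. Define $h'$ on homogeneous elements $x\in P_\beta$ by $h'(x)=(h(x))_\beta$, the $\beta$-component in $M$, and extend additively. We must check three things.
\begin{enumerate}
\item $h'$ is $R$-linear. Take $r\in R_\gamma$ and $x\in P_\beta$. Then $rh(x)=\sum_\delta r\,h(x)_\delta$, and $r\,h(x)_\delta\in M_{\gamma\delta}$. By left cancellation, $\gamma\delta=\gamma\beta$ holds only for $\delta=\beta$. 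Hence $(rh(x))_{\gamma\beta}=r\,h(x)_\beta$, which gives $h'(rx)=rh'(x)$. This is where cancellation is essential.
\item $gh'=f$. Since $g$ is homogeneous, $g(h(x)_\beta)=(gh(x))_\beta=f(x)_\beta=f(x)$, because $f(x)\in N_\beta$.
\item $h'$ is homogeneous. This holds by construction.
\end{enumerate}
The same projection trick of taking the $\beta$-component, applied degreewise, also justifies the descent step needed in $(2)\Rightarrow(3)$ above. This completes the cycle.
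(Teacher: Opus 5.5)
Your proof is correct. It differs from the paper mainly in being self-contained.

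The paper does not argue $(1)\Leftrightarrow(2)\Leftrightarrow(5)$ at all; it cites Li's Proposition 1.2. It then runs $(2)\Rightarrow(3)\Rightarrow(4)\Rightarrow(5)$ and asserts that right exactness in $(2)\Rightarrow(3)$ follows immediately from the definition.

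You prove $(1)\Rightarrow(2)$ directly by replacing an arbitrary lift $h$ with $x\mapsto h(x)_\beta$ on $P_\beta$. This shows exactly where left cancellation is needed. You get $(5)\Rightarrow(1)$ by forgetting the grading. You also treat the components $\Hom_\alpha(P,-)$ with $\alpha\neq\varepsilon$ in $(3)$ explicitly. That is a real point, since gr-projectivity only speaks about degree-$\varepsilon$ maps.

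Your argument for this step (lift on a gr-free cover, then compose with a homogeneous splitting) is valid, but two shorter routes exist:
\begin{enumerate}
\item Your own projection trick, taking the $\beta\alpha$-component of $h(x)$ for $x\in P_\beta$, gives $(1)\Rightarrow(3)$ at once, as your last sentence hints.
\item Although $P$ cannot be shifted, the target can. By right cancellation, $N^{(\alpha)}:=\bigoplus_{\beta}N_{\beta\alpha}$ graded by $N^{(\alpha)}_\beta=N_{\beta\alpha}$ is a $\Gamma$-graded module with $\Hom_\alpha(P,N)=\Hom_\varepsilon(P,N^{(\alpha)})$. Moreover $g$ restricts to a homogeneous epimorphism $M^{(\alpha)}\to N^{(\alpha)}$. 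This is presumably what the paper's ``immediately'' rests on.
\end{enumerate}

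The paper's version buys brevity. Yours buys a complete argument within the monoid setting.
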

\begin{proof}
For $(1)\Leftrightarrow(2)\Leftrightarrow(5)$ see \cite[Proposition 1.2]{li12}.

$(2)\Rightarrow(3)$ As we have already seen, $\sideset{^*}{_{R}}{\Hom}(P,-)$ is left exact. The right exactness follows immediately from the definition of
gr-projective modules.

$(3)\Rightarrow(4)$ Applying the exact functor $\sideset{^*}{_{R}}{\Hom}(P,-)$ to the short exact sequence $0\to L\stackrel{f}\to M\stackrel{g}\to P\to 0$ gives rise to a homogeneous homomorphism $h:P\to M$ such that $gh=1_P$.

$(4)\Rightarrow(5)$ As we recalled above, there exists a homogeneous homomorphism $\varphi:F\to P$, where $F$ is a gr-free $\Gamma$-graded left $R$-module. By the assumption, there is a homogeneous homomorphism $h:P\to F$ such that $gh=1_P$. Then $\theta:P\oplus\ker(g)\to F$ given by $\theta(x,y)=h(x)+y$ is a homogeneous isomorphism.
\end{proof}

Assume that $M$ is a $\Gamma$-graded left $R$-module which is finitely presented in the category of left $R$-modules. Let $F_1\to F_0\stackrel{f}{\to} M\to 0$ be a finite presentation of $M$ (in the usual sense); so that $0\to\ker(f)\to F_0\stackrel{f}{\to} M\to0$ is exact. Note that $M$ is finitely generated; so we can choose a finite set of generators $x_1,\ldots,x_n$, such that each $x_i$ is a homogeneous element of degree $\alpha_i$. Let $F'_0:=\oplus_{i=1}^nRe_i$ be the gr-free $\Gamma$-graded left $R$-module with homogeneous basis $\{e_1,\ldots,e_n\}$, $\deg(e_i)=\alpha_i$. Define the homogeneous homomorphism $f':F'_0\to M$ by $e_i\mapsto x_i$. Hence, there is an exact sequence $0\to\ker(f')\to F'_0\stackrel{f'}{\to} M\to0$. It follows from Schanuel's Lemma \cite[Proposition 3.12]{ro09} that $\ker(f)\oplus F'_0\cong\ker(f')\oplus F_0$. Since $\ker(f)$ is finitely generated, we see that $\ker(f')$ is finitely generated, too. Hence $\ker(f')$ is an image of a finitely generated gr-free left $R$-module $F'_1$. Therefore, $M$ is finitely presented in the category of $\Gamma$-graded left $R$-modules.

\subsection{Graded-injective modules}

The notion of graded injective modules was investigated in \cite{ff74, bh98} in the case where $\Gamma=\mathbb{Z}$ and in \cite{no04} in the case where $\Gamma$ is a group. It is known that the notions of graded injective and (usual) injective are not the same. For example, over an arbitrary field $k$, consider the Laurent polynomial ring $R=k[X,X^{-1}]$ with $\mathbb{Z}$-gradation. Then by \cite[Remark 2.3.3]{no04}, the left $R$-module $R$ is graded injective but not injective.

A $\Gamma$-graded left $R$-module $E$ is said to be a \emph{graded injective module} (for short, \emph{gr-injective module}) if for every homogeneous homomorphisms $f:M\to E$ and $g:M\to N$ with $g$ a homogeneous injection, there exists a homogeneous homomorphism $h:N\to E$ such that the diagram
\begin{displaymath}
\xymatrix{
0 \ar[r] & M \ar[r]^{g}\ar[d]_{f} & N \ar@{-->}[dl]^{h}  \\
& E &
 }
\end{displaymath}
commutes.

We now provide some useful characterizations of gr-injective modules, which will be used in the sequel (see \cite[Corollary 2.4.8]{no04} when $\Gamma$ is a group).

\begin{theorem}\label{baer} (The graded version of Baer's Theorem) Let $\Gamma$ be a cancelation monoid and $M$ be a $\Gamma$-graded left $R$-module. Then $M$ is gr-injective if and only if for any homogeneous left ideal $I$ of $R$, any $\alpha\in\Gamma$ and any homogeneous homomorphism $f:I\to M$ of degree $\alpha$, there exists $m_{\alpha}\in M_{\alpha}$ such that $f(r)=rm_{\alpha}$ for any $r\in I$.
\end{theorem}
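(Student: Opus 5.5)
We prove the two implications separately. The one real idea is to realise a homogeneous homomorphism of degree $\alpha$ as an ordinary homogeneous homomorphism (degree $\varepsilon$) out of a "shifted" copy of $I$ inside a gr-free module of rank one. The converse is the usual Zorn's lemma argument from Baer's theorem, with the gradings tracked throughout.

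\emph{Necessity.} Assume $M$ is gr-injective, and let $f:I\to M$ be homogeneous of degree $\alpha$, so $f(I_\beta)\subseteq M_{\beta\alpha}$. Let $F=Re$ be the gr-free module with basis $e$ of degree $\alpha$.
\begin{enumerate}
\item By right cancellation, $F_\gamma=R_\beta e$ for the unique $\beta$ (if any) with $\beta\alpha=\gamma$.
\item Hence $Ie$ is a graded submodule of $F$, generated by the homogeneous elements $re$ with $r\in H(I)$.
\item Since $e$ is a basis element, $re=0$ forces $r=0$. So $f':Ie\to M$, $re\mapsto f(r)$, is well defined.
\item $f'$ is homogeneous of degree $\varepsilon$: for $r\in I_\beta$ we have $re\in F_{\beta\alpha}$ and $f(r)\in M_{\beta\alpha}$.
\item Gr-injectivity of $M$ extends $f'$ to a homogeneous $h:F\to M$. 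Put $m_\alpha=h(e)\in M_\alpha$.
\item Then $f(r)=h(re)=rm_\alpha$ for all $r\in I$.
\end{enumerate}

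\emph{Sufficiency.} Let $g:A\to B$ be a homogeneous injection and $f:A\to M$ homogeneous. Identifying $A$ with the graded submodule $g(A)$, we may assume $A\subseteq B$. Consider the set of pairs $(C,h)$ with $A\subseteq C\subseteq B$, $C$ a graded submodule and $h:C\to M$ a homogeneous homomorphism extending $f$. Order the pairs by extension.

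A union of a chain of graded submodules is again generated by homogeneous elements, so it is graded, and the union of the maps is homogeneous. Zorn's lemma therefore gives a maximal pair $(C,h)$.

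Suppose $C\neq B$. Since $B$ is graded, there is a homogeneous $b\in B_\alpha\setminus C$. Put $I=\{r\in R: rb\in C\}$.
\begin{itemize}
\item[(a)] $I$ is a homogeneous left ideal. Write $r=\sum r_\beta$; then $rb=\sum r_\beta b$ with $r_\beta b\in B_{\beta\alpha}$. These degrees $\beta\alpha$ are pairwise distinct by right cancellation. Since $C$ is graded, $rb\in C$ forces each $r_\beta b\in C$.
\item[(b)] The map $\varphi:I\to M$, $r\mapsto h(rb)$, is homogeneous of degree $\alpha$, since $r\in I_\beta$ gives $rb\in C_{\beta\alpha}$.
\item[(c)] The hypothesis yields $m_\alpha\in M_\alpha$ with $h(rb)=rm_\alpha$ for all $r\in I$.
\item[(d)] Define $h':C+Rb\to M$ by $c+rb\mapsto h(c)+rm_\alpha$. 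It is well defined: if $c+rb=0$ then $r\in I$, and $h(c)=-h(rb)=-rm_\alpha$.
\item[(e)] $C+Rb$ is graded, being generated by homogeneous elements. $h'$ is homogeneous because $r_\beta m_\alpha\in M_{\beta\alpha}$ matches $\deg(r_\beta b)$.
\end{itemize}
So $(C+Rb,h')$ strictly extends $(C,h)$, contradicting maximality. Hence $C=B$, and $M$ is gr-injective.

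The main obstacle is the degree bookkeeping over a noncommutative cancelation monoid. Right cancellation is needed in three places: to identify $F_\gamma$ in the necessity part, to show that $I$ is homogeneous, and to check that $\varphi$ and $h'$ respect degrees. In the necessity part, the shift must be realised by multiplying on the right by $e$ of degree $\alpha$, so that degrees become $\beta\alpha$ and match the convention $f(M_\beta)\subseteq N_{\beta\alpha}$.
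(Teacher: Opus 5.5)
Your proposal is correct and follows the paper's proof essentially step for step: for necessity you use the rank-one gr-free module $Re$ with $\deg(e)=\alpha$ and extend from the shifted copy $Ie$, and for sufficiency you run the Zorn's lemma argument on graded partial extensions, enlarging a maximal one across $C+Rb$ via the homogeneous left ideal $\{r\in R\mid rb\in C\}$. You also supply several checks the paper leaves implicit: that this ideal is homogeneous, that the extension is well defined, and where right cancellation in $\Gamma$ is used.
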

\begin{proof}
Assume that $M$ is gr-injective, $I$ is a homogeneous left ideal of $R$, and $f:I\to M$ is a homogeneous homomorphism of degree $\alpha\in\Gamma$. There is a gr-free left $R$-module $F=Re$ with homogeneous basis $\{e\}$ of degree $\alpha$ by \cite[Page 2698]{li12}. Consider the following diagram
\begin{displaymath}
\xymatrix{
0\ar[r]  & Ie \ar@{^{(}->}[r]\ar[d]_{\overline{f}} & F\\
& M, &}
\end{displaymath}
where $\overline{f}(re)=f(r)$, which is a homogeneous homomorphism. By assumption there is a homogeneous homomorphism $\overline{g}:F\to M$ such that $\overline{g}\mid_{Ie}=\overline{f}$. Putting $m_{\alpha}:=\overline{g}(e)$, we have $f(r)=\overline{f}(re)=\overline{g}(re)=rm_{\alpha}$.

Conversely, assume we have the following diagram with exact row of $\Gamma$-graded left $R$-modules and homogeneous homomorphisms:
\begin{displaymath}
\xymatrix{
0\ar[r]  & A \ar[r]^{\varphi}\ar[d]_{f} & B\\
& M. &
}
\end{displaymath}
Let $\Sigma$ be the set of all ordered pairs $(B',g')$ where $\varphi(A)\subseteq B'\subseteq B$, $B'$ is a graded submodule of $B$ and
\begin{displaymath}
\xymatrix{
0\ar[r]  & A \ar[r]^{\varphi}\ar[d]_{f} & B'\ar[dl]^{g'}\\
& M &
}
\end{displaymath}
commutes. Partially order $\Sigma$ by $(B'',g'')\leq(B',g')$ when $B''\subseteq B'$ and $g'\mid_{B''}=g''$. Then $\Sigma$ is nonempty and inductive. Let $(B',g')$ be a maximal element of $\Sigma$. We claim that $B'=B$. Otherwise, choose a homogeneous element $x\in B\setminus B'$ of degree $\alpha$. Let $I=\{r\in R\mid rx\in B'\}$ which is a homogeneous left ideal of $R$. Define $\overline{f}:I\to M$ by $\overline{f}(r)=g'(rx)$ for $r\in I$. It is clear that $\overline{f}$ is a homogeneous homomorphism of degree $\alpha$. By assumption, there is $m_{\alpha}\in M_{\alpha}$ such that $\overline{f}(r)=rm_{\alpha}$ for $r\in I$. Set $B'':=B'+Rx$ and define $g'':B''\to M$ by $g''(b+rx)=g'(b)+rm_{\alpha}$. It can be seen that $g''$ is a well defined homomorphism. We show that $g''$ is homogeneous. Indeed, assume that $\gamma\in\Gamma$ and $b''\in B''_{\gamma}$. Then there are $b\in B'_{\gamma}$ and $r\in R_{\beta}$ with $\beta\alpha=\gamma$ such that $b''=b+rx$. Hence $rm_{\alpha}$ is homogeneous of degree $\beta\alpha=\gamma$; so $g''(b'')=g'(b)+rm_{\alpha}$ is homogeneous of degree $\gamma$. This means that $(B'',g'')$ properly extends $(B',g')$, contradicting the maximality of $(B',g')$.
\end{proof}

\begin{proposition}\label{inj}
Assume that $\Gamma$ is a cancelation monoid, $R=\bigoplus_{\alpha \in \Gamma}R_{\alpha}$ is a $\Gamma$-graded ring and $E$ is a $\Gamma$-graded left $R$-module. Then the following are equivalent:
\begin{enumerate}
  \item $E$ is gr-injective.
  \item $\sideset{^*}{_{R}}{\Hom}(-,E)$ is an exact functor in the category of $\Gamma$-graded left $R$-modules.
  \item Every short exact sequence of $\Gamma$-graded left $R$-modules and homogeneous homomorphisms $0\to E\to M\to N\to 0$ splits via a homogeneous homomorphism.
\end{enumerate}
\end{proposition}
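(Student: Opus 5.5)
I will prove the cycle $(1)\Rightarrow(2)\Rightarrow(3)\Rightarrow(1)$, mirroring the proof of Proposition \ref{proj}.

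\emph{$(1)\Rightarrow(2)$.} The functor $\sideset{^*}{_{R}}{\Hom}(-,E)$ is already left exact, so it suffices to show that for a homogeneous injection $g:M\to N$ the induced map $\sideset{^*}{_{R}}{\Hom}(N,E)\to\sideset{^*}{_{R}}{\Hom}(M,E)$ is onto. Since the target is the direct sum of the $\Hom_{\alpha}(M,E)$, it is enough to lift a homogeneous $f:M\to E$ of a fixed degree $\alpha$. If $\Gamma$ were a group I would shift gradings. In a monoid, however, $\beta\alpha$ need not exhaust $\Gamma$, so I will not shift. Instead I will argue directly with Zorn's lemma, exactly as in the converse half of Theorem \ref{baer}. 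Consider pairs $(B',g')$ with $g(M)\subseteq B'\subseteq N$, $B'$ graded, and $g'$ homogeneous of degree $\alpha$ extending $f$. Take a maximal pair and a homogeneous $x\in N\setminus B'$ of degree $\gamma$. The map $r\mapsto g'(rx)$ on $I=\{r\in R\mid rx\in B'\}$ is homogeneous of degree $\gamma\alpha$. Applying the forward direction of Theorem \ref{baer} (valid since $E$ is gr-injective) gives $m\in E_{\gamma\alpha}$ with $g'(rx)=rm$. Then $b+rx\mapsto g'(b)+rm$ extends $g'$ with degree $\alpha$: for $r\in R_\beta$ we have $\deg(rx)=\beta\gamma$ and $\deg(rm)=\beta\gamma\alpha$. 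This contradicts maximality.

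The main obstacle is this degree bookkeeping. In particular, the forward direction of Theorem \ref{baer} must be available for arbitrary degree, which it is, since it was proved for every $\alpha\in\Gamma$. Associativity and cancelation in $\Gamma$ make the degree computation consistent.

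\emph{$(2)\Rightarrow(3)$.} Let $0\to E\stackrel{f}{\to} M\to N\to 0$ be exact with homogeneous maps. Exactness of $\sideset{^*}{_{R}}{\Hom}(-,E)$ makes $\Hom_{\varepsilon}(M,E)\to\Hom_{\varepsilon}(E,E)$ onto; I take the degree-$\varepsilon$ component, noting that a preimage of $1_E\in\Hom_\varepsilon$ can be projected to its $\varepsilon$-component. This gives a homogeneous $h:M\to E$ with $hf=1_E$, so the sequence splits homogeneously.

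\emph{$(3)\Rightarrow(1)$.} Embed $E$ homogeneously in a gr-injective module $Q$ and use that a graded direct summand of a gr-injective module is gr-injective. The latter is routine: compose extensions with the homogeneous projection. To construct $Q$ without graded injective envelopes, which come later in the paper, I would instead verify Baer's criterion of Theorem \ref{baer} directly via a pushout. Given a homogeneous $\phi:I\to E$ of degree $\alpha$, let $F=Re$ with $\deg e=\alpha$ and $\bar\phi:Ie\to E$, $re\mapsto\phi(r)$, as in Theorem \ref{baer}. Form the graded pushout $P=(E\oplus F)/\{(\bar\phi(re),-re)\}$; this is a graded submodule, since the generators are homogeneous. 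The map $E\to P$ is a homogeneous injection, and the usual argument works since $Ie\to F$ is injective. Apply (3) to $0\to E\to P\to P/E\to0$ to get a homogeneous retraction $\rho:P\to E$. Then $m_\alpha=\rho(\overline{(0,e)})\in E_\alpha$ satisfies $rm_\alpha=\rho(\overline{(0,re)})=\rho(\overline{(\bar\phi(re),0)})=\phi(r)$. Theorem \ref{baer} then gives (1).
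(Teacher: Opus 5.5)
Your proof is correct. For $(2)\Rightarrow(3)$ it is the expected argument, and you rightly pass to the $\varepsilon$-component of a preimage of $1_E$. For the other two implications you route through Theorem \ref{baer}, whereas the paper's terse proof intends direct arguments.

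For $(3)\Rightarrow(1)$, the paper's pushout (the same one used in the proof of Proposition \ref{ess}) works for an arbitrary homogeneous monomorphism $g:M\to N$ and homogeneous $f:M\to E$. In $W=(N\oplus E)/\{(g(x),-f(x))\mid x\in M\}$ the map $E\to W$ is a homogeneous monomorphism, (3) gives a homogeneous retraction $\rho:W\to E$, and $\rho$ composed with $N\to W$ extends $f$. Your computation is exactly this, specialized to $Ie\subseteq Re$, so the extra appeal to the Zorn half of Theorem \ref{baer} is unnecessary.

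For $(1)\Rightarrow(2)$, your Zorn argument is valid, but your stated reason for not shifting is mistaken. Let $M\langle\alpha\rangle$ be $M$ regraded by $M\langle\alpha\rangle_{\beta\alpha}=M_\beta$ and $M\langle\alpha\rangle_\delta=0$ for $\delta\notin\Gamma\alpha$. Right cancelation makes this well defined, and associativity makes it a graded $R$-module. Moreover $\Hom_\alpha(M,E)=\Hom_\varepsilon(M\langle\alpha\rangle,E)$, so applying the definition of gr-injectivity to the homogeneous injection $g:M\langle\alpha\rangle\to N\langle\alpha\rangle$ lifts degree-$\alpha$ maps immediately. That $\Gamma\alpha$ may be smaller than $\Gamma$ is harmless, since the components outside $\Gamma\alpha$ are simply zero.

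This shift is precisely what the forward half of Theorem \ref{baer} uses: $Re$ with $\deg e=\alpha$ is $R\langle\alpha\rangle$. In effect, your argument applies it one cyclic extension at a time and then reassembles with Zorn. Your route does make explicit the degree bookkeeping that the paper dismisses as easy, but the shift achieves the same in one line.
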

\begin{proof}
$(1)\Rightarrow(2)$ and $(2)\Rightarrow(3)$ are easy and $(3)\Rightarrow(1)$ is straightforward by pushout construction.
\end{proof}

It is well known that every (not necessarily graded) injective $R$-module is divisible \cite[Corollary (3.17)$^\prime$]{la98}, and a commutative integral domain $R$ is a Dedekind domain if and only if every divisible $R$-module is injective \cite[Theorem 4.24]{ro09}. The notion of a graded divisible module was introduced in \cite[Page 47]{rh16} for group-graded modules. This notion can be generalized to monoid-graded modules as in the following. Recall that a nonzero element $a\in R$ is a \emph{right zero divisor} if there exists a nonzero element $b\in R$ such that $ba=0$. The element $a\in R$ is called to be a \emph{right nonzero divisor} if it is not a right zero divisor.

\begin{definition}
Assume that $\Gamma$ is a cancelation monoid. A $\Gamma$-graded left $R$-module $M$ is called graded divisible (for short, gr-divisible) if for $\alpha,\beta,\gamma\in\Gamma$ with $\alpha\gamma=\beta$ and every right nonzero divisor $r\in R_{\alpha}$ and every $y\in M_{\beta}$, there exists $x\in M_{\gamma}$ such that $rx=y$.
\end{definition}

It is clear that every graded and divisible $R$-module is gr-divisible. But the converse is no longer true. For example, over an arbitrary field
$k$, consider the Laurent polynomial ring $R=k[X,X^{-1}]$ with $\mathbb{Z}$-gradation. Then by \cite[Remark 2.3.3]{no04}, the left $R$-module $R$ is gr-injective but not injective. Hence, it is gr-divisible by the following proposition but not divisible since $R$ is a Dedekind domain (see \cite[Theorem 4.24]{ro09}).

\begin{proposition}\label{injdiv}
Assume that $\Gamma$ is a cancelation monoid. Any gr-injective left $R$-module is gr-divisible.
\end{proposition}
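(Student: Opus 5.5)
The plan is to apply the graded Baer criterion (Theorem \ref{baer}) to the principal left ideal generated by the nonzero divisor. Let $M$ be gr-injective, and fix $\alpha,\beta,\gamma\in\Gamma$ with $\alpha\gamma=\beta$, a right nonzero divisor $r\in R_{\alpha}$, and $y\in M_{\beta}$. The left ideal $I=Rr$ is generated by the homogeneous element $r$, so it is a homogeneous left ideal. Its homogeneous components are $I_{\delta}=R_{\delta'}r$ with $\delta'\alpha=\delta$, where $\delta'$ is unique by right cancelation.

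Define $f:Rr\to M$ by $f(sr)=sy$. This is well defined: if $sr=s'r$, then $(s-s')r=0$, and since $r$ is a right nonzero divisor, $s=s'$. It is clearly $R$-linear. Next I check that $f$ is homogeneous of degree $\gamma$. A homogeneous element of $I$ of degree $\delta$ has the form $sr$ with $s\in R_{\delta'}$ and $\delta'\alpha=\delta$, and its image $sy$ lies in $M_{\delta'\beta}=M_{\delta'\alpha\gamma}=M_{\delta\gamma}$.

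To make this rigorous, I need that every element of $I_{\delta}$ is of the form $sr$ with $s\in R_{\delta'}$. Write $x=sr\in I_{\delta}$ and decompose $s=\sum s_{\lambda}$ into homogeneous parts. Each $s_{\lambda}r$ lies in degree $\lambda\alpha$, and these degrees are distinct by right cancelation. Comparing components, $x=s_{\delta'}r$. This bookkeeping step is the only mild obstacle.

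Theorem \ref{baer} now gives $m_{\gamma}\in M_{\gamma}$ with $f(t)=tm_{\gamma}$ for all $t\in I$. Taking $t=r=1\cdot r$ gives $y=f(r)=rm_{\gamma}$, so $x:=m_{\gamma}$ is the required element.
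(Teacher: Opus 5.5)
Your proposal is correct and is essentially the paper's proof: define $Rr\to M$ by $sr\mapsto sy$, use that $r$ is a right nonzero divisor for well-definedness and the cancelation law for homogeneity of degree $\gamma$, then apply the forward direction of Theorem \ref{baer} and evaluate at $r$. Your component check $I_\delta=R_{\delta'}r$ spells out the step the paper only attributes to cancelation, and your notation avoids the paper's slip of writing $Ra$ and $a$ where $Rr$ and $r$ are meant.
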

\begin{proof}
Assume that $E$ is a gr-injective left $R$-module, and choose $\alpha,\beta,\gamma\in\Gamma$ with $\alpha\gamma=\beta$, a right nonzero divisor $r\in R_{\alpha}$ and $y\in M_{\beta}$. It follows that the map $\varphi:Ra\to E$, defined by $\varphi(ra)=ry$ is well defined, and using the cancelation law on $\Gamma$, it is a homogeneous homomorphism of degree $\gamma$. Then by Theorem \ref{baer}, there exists $x\in E_{\gamma}$ such that $\varphi(r)=rx$ for $r\in Ra$. Hence, $y=\varphi(a)=ax$.
\end{proof}

In the following, we show that every $\Gamma$-graded left $R$-module has a graded injective envelope. For this, we follow \cite[Sections 3.2 and 3.6]{bh98}.

Let $M$ be a $\Gamma$-graded left $R$-module, and let $N$ be a graded submodule of $M$. The extension $N\subseteq M$ is called a \emph{graded-essential extension} (for short, \emph{gr-essential extension}) if $U\cap N\neq 0$ for any graded submodule $0\neq U\subseteq M$. If in addition $N\neq M$, the extension is called a \emph{proper gr-essential extension}. As in the case where $\Gamma=\mathbb{Z}$ (see \cite[Lemma 1.1]{ff74}) or $\Gamma$ is a group (see \cite[Proposition 2.3.5]{no04}), one can show that $N\subseteq M$ is a gr-essential extension if and only if it is an essential extension.

\begin{proposition}\label{ess}
A $\Gamma$-graded left $R$-module $N$ is gr-injective if and only if it has no proper gr-essential extension.
\end{proposition}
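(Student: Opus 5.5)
The plan is to mirror the classical proof that an injective module has no proper essential extension, and conversely, using Proposition \ref{inj} and Zorn's lemma within the category of $\Gamma$-graded modules with homogeneous homomorphisms.

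\emph{Forward direction.} Suppose $N$ is gr-injective and $N\subseteq M$ is a gr-essential extension, with $N$ a graded submodule of $M$. The inclusion $0\to N\to M\to M/N\to 0$ is a short exact sequence of $\Gamma$-graded modules with homogeneous maps, since $M/N$ is graded because $N$ is a graded submodule. By Proposition \ref{inj}(3) it splits via a homogeneous homomorphism. Hence $M=N\oplus U$ with $U$ the kernel of a homogeneous retraction $M\to N$. Such a kernel is a graded submodule. Since $U\cap N=0$ and the extension is gr-essential, $U=0$, so $M=N$.

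\emph{Converse.} Assume $N$ has no proper gr-essential extension. I will first embed $N$ as a graded submodule of some gr-injective module $E$. This is the main obstacle, since the paper has not yet constructed one. I would first try to build it directly. Take the graded module ${}^*\Hom_{\mathbb{Z}}(R,D)$-type coinduced object, whose components are defined degreewise, with $D$ a divisible abelian group into which each $N_\alpha$ embeds. Then verify gr-injectivity via Theorem \ref{baer} by an adjunction argument. With the monoid (not group) grading, the degree bookkeeping for $\Hom_\alpha$ must be checked with cancelation.

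If that is messy, the alternative is to avoid an ambient injective. Apply Proposition \ref{inj}(3) to an arbitrary graded sequence $0\to N\to M\to L\to 0$. By Zorn's lemma, choose a graded submodule $U\subseteq M$ maximal with respect to $U\cap N=0$; the union of a chain of graded submodules is graded. Then $N\cong (N+U)/U\subseteq M/U$, and I claim this is gr-essential. Let $V/U\neq0$ be a graded submodule of $M/U$. Then $V\supsetneq U$ is graded, so maximality gives $V\cap N\neq0$, and hence $(V/U)\cap((N+U)/U)\neq 0$. By hypothesis $M/U=(N+U)/U$, so $M=N+U=N\oplus U$. The projection onto $N$ along the graded complement $U$ is homogeneous, and it gives the required splitting. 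So $N$ is gr-injective by Proposition \ref{inj}(3).

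This second route avoids constructing injectives entirely, so I would adopt it. In that case the only remaining care points are two checks. First, the graded submodules used (the union of a chain, $N+U$, $V$) are graded. Second, the projection $N\oplus U\to N$ is homogeneous. Both follow from the characterization of graded submodules via homogeneous components and cancelation in $\Gamma$.
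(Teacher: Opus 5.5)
Your proof is correct and essentially the paper's: the forward direction is identical, and for the converse the paper runs the same Zorn's-lemma argument (a maximal graded $D$ with $D\cap N=0$, then $W/D$ is a gr-essential extension of $N$, so $W=N\oplus D$) inside the explicit pushout $W=(V\oplus N)/C$. You instead run it in an arbitrary graded extension and invoke Proposition \ref{inj}(3)$\Rightarrow$(1), which is itself the pushout step; the one unstated detail, that a nonzero $x\in V\cap N$ gives $x+U\neq 0$ because $N\cap U=0$, is immediate.
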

\begin{proof}
Assume that $M$ is a $\Gamma$-graded left $R$-module such that $N$ is a graded submodule of $M$. If $N$ is gr-injective, then there exists a graded submodule $W$ of $M$ such that $M=N\oplus W$ by Proposition \ref{inj}. Then $N\cap W=0$ and so, if the extension is gr-essential, $W=0$. It follows that $N=M$.

Conversely, suppose that $N$ has no proper gr-essential extension. Given a homogeneous monomorphism $\varphi:U\to V$ and a homogeneous homomorphism $f:U\to M$, we want to construct a homogeneous homomorphism $h:V\to N$ such that $f=h\varphi$. We consider the pushout diagram
\begin{displaymath}
\xymatrix{ U \ar[r]^{\varphi} \ar[d]_{f} &
V \ar[d]^{g} \\
N \ar[r]^{\psi} & W. }
\end{displaymath}
Indeed, we may choose $W:=(V\oplus N)/C$ with $C:=\{(\varphi(x),-f(x))\mid x\in U\}$ which is a graded submodule of $V\oplus N$; $g$ and $\psi$ are the natural homogeneous projections. Here $\psi$ is a homogeneous monomorphism, since $\varphi$ is a monomorphism. Thus we may consider $N$ as a graded submodule of $W$. It now follows from Zorn's lemma that there exists a maximal graded submodule $D\subseteq W$ such that $N\cap D=0$, and so $N$ may even be considered as a graded submodule of $W/D$; obviously, $W/D$ is a gr-essential extension of $N$. It follows that $N=W/D$, since $N$ has no proper gr-essential extension, and so $W=N\oplus D$. Let $\pi:W\to N$ be the natural projection of $W$ onto the first summand. The composition $\pi g:V\to N$ is an extension of $f$.
\end{proof}

We now prove that any $\Gamma$-graded left $R$-module has a graded injective envelope. In analogy to the definition in the ungraded case, a $\Gamma$-graded left $R$-module $E$ is called a \emph{gr-injective envelope of $M$} if it is gr-injective and a gr-essential extension of $M$.

\begin{theorem}\label{injectivehull}
Any $\Gamma$-graded left $R$-module $M$ admits a gr-injective envelope, and any two gr-injective envelopes are isomorphic as $\Gamma$-graded left $R$-modules.
\end{theorem}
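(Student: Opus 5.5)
The plan follows the classical Eckmann--Schopf argument, transplanted to the graded setting along the lines of \cite[Sections 3.2 and 3.6]{bh98}. The proof has three steps: embed $M$ in a gr-injective module, cut out a maximal gr-essential extension inside it, and then prove uniqueness.

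\textbf{Step 1: embed $M$ in some gr-injective module $E$.} I would use the ungraded theory together with a coinduction functor. Let $Q$ be an injective $\mathbb{Z}$-module (or a $\mathbb{Q}/\mathbb{Z}$-type cogenerator). Define the graded left $R$-module $\sideset{^*}{_{\mathbb{Z}}}{\Hom}(R,Q)$: its degree-$\gamma$ component consists of the additive maps $\phi: R\to Q$ that vanish on $R_\delta$ for all $\delta$ with no factorization compatible with $\gamma$. More precisely, one can take all of $\Hom_{\mathbb Z}(R_\alpha,Q)$ and assign it to the appropriate degrees using the right $R$-action.

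Because $\Gamma$ is only a monoid, I would use a cleaner route. Take a homogeneous embedding of $M$ into a module of the form $\prod_\gamma$ of suitably shifted coinduced modules, and verify gr-injectivity via Theorem \ref{baer}. An alternative that avoids constructing coinduced modules is to iterate Baer's criterion directly. Set $E_0=M$, and let $E_{n+1}$ be the pushout that adjoins, for each homogeneous left ideal $I$ and each homogeneous $f:I\to E_n$ of degree $\alpha$, an element of degree $\alpha$ extending $f$. Then take the direct limit over a sufficiently large regular ordinal, using Lemma \ref{lim} for exactness. Each stage is a homogeneous monomorphism, since pushouts of homogeneous monos along homogeneous maps stay monic, as in the proof of Proposition \ref{ess}. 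Since $R$ has only a set of homogeneous left ideals, each with bounded generation, any $f:I\to E$ factors through some stage, so Theorem \ref{baer} gives gr-injectivity. I expect this step to be the main obstacle: getting the cardinality/cofinality argument right, or alternatively checking that coinduced modules are genuinely gr-injective when $\Gamma$ is merely cancellative.

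\textbf{Step 2: cut out a maximal gr-essential extension.} With $M\subseteq E$ and $E$ gr-injective, I would apply Zorn's lemma to the set of graded submodules $N$ with $M\subseteq N\subseteq E$ such that $N$ is gr-essential over $M$. A union of a chain of such modules is again gr-essential over $M$: a nonzero graded $U$ in the union contains a nonzero homogeneous element, which lies in some member of the chain. Let $N$ be maximal. I claim $N$ is gr-injective, and I would prove this through Proposition \ref{ess}. Suppose $N\subsetneq L$ is a gr-essential extension. By gr-injectivity of $E$, the inclusion $N\to E$ extends to a homogeneous $h:L\to E$. Then $\ker h\cap N=0$, so $\ker h=0$ by essentiality, which gives $L\cong h(L)\subseteq E$. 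Moreover $h(L)$ is gr-essential over $M$, since essentiality is transitive. This contradicts the maximality of $N$. Hence $N$ is a gr-injective envelope of $M$.

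\textbf{Step 3: uniqueness.} Given two envelopes $E_1$ and $E_2$, extend the inclusion $M\to E_2$ to a homogeneous $h:E_1\to E_2$ using gr-injectivity. Then $\ker h\cap M=0$, so $h$ is monic by essentiality. The image $h(E_1)$ is gr-injective, so it is a graded direct summand of $E_2$ by Proposition \ref{inj}(3). Its complement meets $M$ trivially and is therefore zero by essentiality. Hence $h$ is a homogeneous isomorphism.
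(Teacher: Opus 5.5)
Your argument is correct, but it is organized differently from the paper's. You follow the classical Eckmann--Schopf pattern: first produce a gr-injective $E\supseteq M$, then take a maximal gr-essential extension of $M$ inside $E$ with all maps homogeneous, and finally prove uniqueness by splitting. Steps 2 and 3 are fine as written, and Step 3 supplies the uniqueness that the paper only calls ``easily obtained''.

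The paper never constructs a gr-injective module in advance. It embeds $M$ in an \emph{ungraded} injective module $T$ and runs Zorn's lemma over submodules $N\subseteq T$ \emph{equipped with} a grading making $M\subseteq N$ gr-essential; $T$ itself carries no grading. If the maximal $E$ had a proper gr-essential extension $E'$, the paper extends $E\subseteq T$ to a non-homogeneous $\varphi:E'\to T$. It shows $\varphi$ is injective by induction on homogeneous components; in effect this is ``gr-essential implies essential'', needed because $\ker\varphi$ need not be graded. It then transports the grading to $\varphi(E')$ and concludes with Proposition \ref{ess}. So $T$ serves only as a set-theoretic container, and the paper thereby bypasses exactly the step you flag as the main obstacle, at the cost of slightly unusual bookkeeping. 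Your route keeps every map homogeneous but needs enough gr-injectives.

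Your transfinite Baer-criterion construction does supply Step 1. Homogeneous left ideals form a set, each is generated by at most $|R|$ homogeneous elements, and a regular cardinal $\kappa>|R|$ makes every homogeneous $f:I\to E_\kappa$ factor through an earlier stage.

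There is, however, a much shorter Step 1, and your worry about coinduction over a monoid disappears if you use a direct sum instead of a $\Hom$. For an $R$-module $N$, put $F(N)=\bigoplus_{\gamma\in\Gamma}N$, with $F(N)_\gamma$ the $\gamma$-th copy and $r_\alpha\cdot(n)_\gamma=(r_\alpha n)_{\alpha\gamma}$. Homogeneous maps $X\to F(N)$ correspond naturally to $R$-linear maps $X\to N$. Hence $F$ is right adjoint to the exact forgetful functor and sends injective modules to gr-injective ones. The map $m_\gamma\mapsto(m_\gamma)_\gamma$ then embeds $M$ homogeneously into $F(T)$ for any injective $T\supseteq M$. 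By contrast, your $\Hom_{\mathbb Z}(R,Q)$ description relies on inverting degrees, which a monoid does not allow, so you were right to abandon it.
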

\begin{proof}
We embed $M$ into a (not necessarily graded) injective $R$-module $T$. We consider the set $\mathcal{S}:=\{N\mid M\subseteq N\text{ is gr-essential and } N\text{ is a submodule of } T\}$. We define a partial order $\leq$ on $\mathcal{S}$ by setting $N_1\leq N_2$ if $N_1$ is a graded submodule of $N_2$. By Zorn's lemma, $\mathcal{S}$ has a maximal element $E$; hence $M\subseteq E$ is gr-essential and $E$ is a submodule of $T$. Suppose $E$ is not gr-injective; then $E$ has a proper gr-essential extension $E\subsetneq E'$ by Proposition \ref{ess}. As $T$ is injective, there exists an $R$-module homomorphism $\varphi:E'\to T$ (not necessarily homogeneous), extending the inclusion $E\subseteq T$. We claim that $\varphi$ is injective. Indeed, assume that there is a nonzero element $x\in \ker\varphi$, so that $x=x_{\alpha_1}+\cdots+x_{\alpha_n}$ is the decomposition of $x$ into nonzero homogeneous components. We show by induction on $n$ that there exists a homogeneous element $a\in R$ such that $0\neq ax\in E$. Since $\varphi\mid_E$ is injective, this gives a contradiction.

If $n=1$, $x$ is homogeneous, and the assertion follows since the extension $E\subseteq E'$ is gr-essential. Now suppose that $n>1$. We choose a homogeneous element $a\in R$ such that $0\neq ax_{\alpha_1}\in E$. Let $x':=x-x_{\alpha_1}=x_{\alpha_2}+\cdots+x_{\alpha_n}$. If $ax'=0$ then $0\neq ax=ax_{\alpha_1}\in E$ and we are done. Otherwise $ax'\neq0$, and by our induction hypothesis we may choose a homogeneous element $b\in R$ such that $0\neq bax'\in E$. Then $0\neq bax= bax'+bax_{\alpha_1}\in E$.

Next let $\widetilde{E}:=\im\varphi$. As $\varphi$ is injective, we may give $\widetilde{E}$ a natural graded structure $\widetilde{E}_{\alpha}=\varphi(E'_{\alpha})$ for all $\alpha\in \Gamma$. Then $E\subsetneq \widetilde{E}$ is a proper gr-essential extension with $\widetilde{E}\subseteq T$, contradicting the maximality of $E$. The uniqueness of the gr-injective envelope is easily obtained.
\end{proof}

\subsection{Graded-flat modules}

Lazard's theorem says that any flat module is a direct limit of finitely generated free modules. In this subsection, we deal with graded flat modules and extend Lazard's theorem to monoid-graded modules.

Let $\Gamma$ be a cancelation monoid and $R$ be a $\Gamma$-graded ring. A $\Gamma$-graded left $R$-module $F$ is said to be a \emph{graded-flat module} (for short, \emph{gr-flat module}) if $f\otimes_R1_F:M\otimes_RF\to N\otimes_RF$ is a homogeneous monomorphism for every homogeneous monomorphism $f:M\to N$ of $\Gamma$-graded right $R$-modules. It is clear that if $F$ is a $\Gamma$-graded $R$-module which is flat, then it is a gr-flat module.

We begin with the following simple lemma.

\begin{lemma}\label{flat}
Every direct limit of a directed system over a directed set $I$ of gr-flat left $R$-modules is gr-flat.
\end{lemma}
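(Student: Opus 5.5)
The plan is to reduce gr-flatness of the direct limit to gr-flatness of each term, using the two parts of Lemma \ref{lim} that concern direct limits and tensor products. Let $\{F_i,\varphi_{ij}\}_{i,j\in I}$ be a direct system of gr-flat $\Gamma$-graded left $R$-modules with homogeneous transition maps, and put $F=\varinjlim_{i\in I}F_i$. By the discussion in Section 2, $F$ is a $\Gamma$-graded left $R$-module. Let $f:M\to N$ be a homogeneous monomorphism of $\Gamma$-graded right $R$-modules. We must show that $f\otimes_R 1_F:M\otimes_RF\to N\otimes_RF$ is a homogeneous monomorphism.

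\textbf{Homogeneity.} Since $f$ preserves degrees, $f\otimes 1_F$ sends a generator $m\otimes x$ with $m\in H(M)$, $x\in H(F)$ to $f(m)\otimes x$. This element has the same degree, so $f\otimes 1_F$ is homogeneous for the gradings on the tensor products described in Section 2.

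\textbf{Injectivity.} For each $i\in I$ we have the maps $f\otimes 1_{F_i}:M\otimes_RF_i\to N\otimes_RF_i$.
\begin{enumerate}
\item These maps form a morphism of direct systems $\{M\otimes_RF_i,1_M\otimes\varphi_{ij}\}\to\{N\otimes_RF_i,1_N\otimes\varphi_{ij}\}$, because $(f\otimes 1)(1\otimes\varphi_{ij})=(1\otimes\varphi_{ij})(f\otimes 1)$.
\item Each sequence $0\to M\otimes_RF_i\to N\otimes_RF_i$ is exact, since $F_i$ is gr-flat.
\item Adjoining the zero direct system on the left, Lemma \ref{lim}(1) gives an exact sequence
$$0\to\varinjlim_{i\in I}(M\otimes_RF_i)\to\varinjlim_{i\in I}(N\otimes_RF_i)$$
whose second map is the induced homogeneous map $\tau$.
\end{enumerate}

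Lemma \ref{lim}(1)–(2) are stated for left $R$-modules, while here the tensor products are only $\Gamma$-graded abelian groups. As in the ungraded case, their proofs (via \cite{so00}) work verbatim for graded $\mathbb{Z}$-modules, so we apply them in that setting.

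\textbf{Identification with the tensor product.} By Lemma \ref{lim}(2) there are homogeneous isomorphisms
$$M\otimes_RF\cong\varinjlim_{i\in I}(M\otimes_RF_i),\qquad N\otimes_RF\cong\varinjlim_{i\in I}(N\otimes_RF_i).$$
It remains to check that under these isomorphisms $\tau$ corresponds to $f\otimes 1_F$. Both maps are compatible with the canonical maps out of $M\otimes_RF_i$, since both send $m\otimes x_i$ to $f(m)\otimes\varphi_i(x_i)$. By the universal property of the direct limit they therefore agree, so $f\otimes 1_F$ is a monomorphism.

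The only genuinely delicate point is this naturality of the isomorphism in Lemma \ref{lim}(2) with respect to $f$, together with the change of category just noted. Both follow from the explicit description of the isomorphism on generators.
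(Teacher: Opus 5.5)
Your proposal is correct and follows essentially the same route as the paper. Both arguments identify $f\otimes 1_F$, via the homogeneous isomorphisms of Lemma \ref{lim}(2), with the direct limit of the monomorphisms $f\otimes 1_{F_i}$, and deduce injectivity from the exactness of direct limits in Lemma \ref{lim}(1). You additionally make explicit the homogeneity of $f\otimes 1_F$, the naturality of the identification, and the harmless passage to graded $\mathbb{Z}$-modules, all of which the paper leaves implicit.
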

\begin{proof}
Assume that $\{F_i,\varphi_{ij}\}$ is a direct system over $I$ of gr-flat left $R$-modules and that $f:M\to N$ is a homogeneous monomorphism of graded right $R$-modules. Then, we obtain a commutative diagram with homogeneous homomorphisms
\begin{displaymath}
\xymatrix{ M\otimes_R(\underset{i\in I}{\varinjlim}F_{i}) \ar[r]^{f\otimes 1} \ar[d] & N\otimes_R(\underset{i\in I}{\varinjlim}F_{i}) \ar[d] \\
\underset{i\in I}{\varinjlim}(M\otimes_RF_{i}) \ar[r]_{\vartheta} & \underset{i\in I}{\varinjlim}(N\otimes_RF_{i}), }
\end{displaymath}
where the vertical maps are the homogeneous isomorphisms, and $\vartheta$ is the homogeneous monomorphism by Lemma \ref{lim}. Hence $f\otimes 1$ is a monomorphism. Therefore $\underset{i\in I}{\varinjlim}F_{i}$ is gr-flat.
\end{proof}

Assume that $C$ is a $\Gamma$-graded left $R$-module and $D$ is a graded submodule of $C$. Let $\mathcal{A}$ and $\mathcal{B}$ be nonempty families of graded submodules of $C$. As in \cite[Section 8.4]{so00}, by these data one can define a \emph{graded $(C,D)$-subquotient system} subject to the following conditions:
\begin{enumerate}
  \item $\mathcal{A}$ and $\mathcal{B}$ are directed under inclusions.
  \item $C=\bigcup_{A\in\mathcal{A}}A$ and $D=\bigcup_{B\in\mathcal{B}}B$.
  \item For each $B\in\mathcal{B}$, there exists $A\in\mathcal{A}$ with $A\supseteq B$.
\end{enumerate}
Given a graded $(C,D)$-subquotient system, set $$\mathcal{I}=\{(A,B)\in\mathcal{A}\times\mathcal{B}\mid A\supseteq B\}.$$
Partially order $\mathcal{I}$ by $(A,B)\leq(A',B')$ if and only if $A\subseteq A'$ and $B\subseteq B'$.
For $i=(A,B)\in \mathcal{I}$ we set $A=A_i$ and $B=B_i$.

For $i, j\in\mathcal{I}$ with $i\leq j$, define homomorphisms $\varphi_{ij}:A_i/B_i\to A_j/B_j$ by $\varphi_{ij}(x+B_i)=x+B_j$ and $\psi_i:A_i/B_i\to C/D$ by $\psi_{i}(x+B_i)=x+D$. It is clear that $\varphi_{ij}$ and $\psi_i$ are homogeneous and $\{A_i/B_i\}_{i\in\mathcal{I}}$ together with $\varphi_{ij}$ is a direct system of graded modules. The proof of the following proposition is the same as in the ungraded case \cite[Proposition 8.12]{so00}, so we omit it.

\begin{proposition}\label{8.12}
Assume $\mathcal{A}$ and $\mathcal{B}$ give a graded $(C,D)$-subquotient system. Then, as in the above notation, $\mathcal{I}$ is directed and there is a homogeneous isomorphism of graded modules $\underset{i\in \mathcal{I}}{\varinjlim}A_i/B_i\cong C/D$.
\end{proposition}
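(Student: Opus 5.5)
We prove Proposition \ref{8.12} by following the ungraded argument and adding a check that every map is homogeneous. First, $\mathcal{I}$ is directed. Given $(A,B),(A',B')\in\mathcal{I}$, use condition (1) to choose $B''\in\mathcal{B}$ containing $B\cup B'$. By (3) there is $A_0\in\mathcal{A}$ with $A_0\supseteq B''$. By (1) again there is $A''\in\mathcal{A}$ containing $A$, $A'$ and $A_0$. Then $(A'',B'')\in\mathcal{I}$ is an upper bound of both pairs. Nonemptiness follows from (3) applied to any element of $\mathcal{B}$, which is nonempty by assumption.

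Next we build the comparison map. For $i\le j$ we have $\psi_j\varphi_{ij}=\psi_i$. By the universal property recalled in Section 2 (the homogeneous version), there is a homomorphism $\psi:\varinjlim A_i/B_i\to C/D$ with $\psi\varphi_i=\psi_i$, where $\varphi_i$ are the canonical maps. Since each $\psi_i$ is homogeneous and the graded components of the limit are $\varinjlim (A_i/B_i)_\alpha$, the map $\psi$ is homogeneous.

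Surjectivity comes from condition (2). Any $x+D$ with $x\in C$ has $x\in A$ for some $A\in\mathcal{A}$. Enlarge $A$, using (1) and (3), so that it contains some $B\in\mathcal{B}$; then $x+D=\psi_{(A,B)}(x+B)$.

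Injectivity is the only step needing care, and it uses that every element of the direct limit has the form $\varphi_i(x+B_i)$ by directedness. Suppose $\psi(\varphi_i(x+B_i))=0$, so $x\in D$. By (2), $x\in B'$ for some $B'\in\mathcal{B}$. Choose $B''\supseteq B_i\cup B'$ in $\mathcal{B}$, and then $A''\in\mathcal{A}$ containing $A_i$ and some member of $\mathcal{A}$ that contains $B''$. With $j=(A'',B'')\ge i$ we get $\varphi_{ij}(x+B_i)=x+B''=0$, so $\varphi_i(x+B_i)=0$.

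Thus $\psi$ is a bijective homogeneous homomorphism. Its inverse is then automatically homogeneous: $\psi$ maps each degree-$\alpha$ component of the limit into $(C/D)_\alpha$, both modules are direct sums of their components, and $\psi$ is bijective, so it restricts to bijections on components. Hence $\psi$ is a homogeneous isomorphism. The main obstacle is only the bookkeeping needed to realize upper bounds inside $\mathcal{I}$, which conditions (1)–(3) handle as shown above.
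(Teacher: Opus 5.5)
Your proof is correct and takes the same approach as the paper. The paper omits the proof and says it is identical to the ungraded one. You reproduce that ungraded argument: directedness of $\mathcal{I}$, the induced map, and surjectivity and injectivity via conditions (1)--(3). You also add the homogeneity checks it needs, including that a bijective homogeneous homomorphism has a homogeneous inverse.
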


\begin{lemma}\label{8.13}
Suppose that $A$ is a $\Gamma$-graded left $R$-module, with $A=A_1\oplus A_2$, where $A_1$ and $A_2$ are graded submodules of $A$. Suppose $B_1$ is a graded submodule of $A_1$, and $B_2$ is a graded submodule of $A_2$. Suppose $\theta:A_1/B_1\to A_2/B_2$ is a homogeneous homomorphism, and suppose $A_1$ is generated by homogeneous elements $x_1,\ldots,x_n$. For each $k=1,\ldots,n$, choose the homogeneous element $y_k\in A_2$ such that $\theta(x_k+B_1)=y_k+B_2$. Let $B$ be the (graded) submodule of $A$ generated by $B_1, B_2$, and the homogeneous elements $x_1-y_1,\ldots,x_n-y_n$. Let $\psi_i:A_i/B_i\to A/B$ be the natural homogeneous homomorphism $\psi_{i}(x+B_i)=x+B$. Then $\psi_2$ is a homogeneous isomorphism, and $\psi_2\theta=\psi_1$.
\end{lemma}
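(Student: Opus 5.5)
The plan is to follow the ungraded argument (the analogue of \cite[Lemma 8.13]{so00}) and check at each stage that gradings are respected.

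First, $B$ is graded. It is generated by the graded submodules $B_1,B_2$ together with homogeneous elements. Here we need $y_k$ homogeneous of the same degree as $x_k$, which is what makes $x_k-y_k$ homogeneous. This choice is possible: $\theta$ is homogeneous, so $\theta(x_k+B_1)$ lies in $(A_2/B_2)_{\deg x_k}=(A_{2,\deg x_k}+B_2)/B_2$, and we can lift it to $y_k\in A_{2,\deg x_k}$. Hence $A/B$ is a $\Gamma$-graded module and the natural maps $\psi_1,\psi_2$ are well defined and homogeneous, since $B_i\subseteq B$.

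Next, the identity $\psi_2\theta=\psi_1$. It suffices to check it on the generators $x_k+B_1$ of $A_1/B_1$. There $\psi_2\theta(x_k+B_1)=y_k+B=x_k+B=\psi_1(x_k+B_1)$, because $x_k-y_k\in B$. Both sides are $R$-linear, so they agree.

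For surjectivity of $\psi_2$, take any $a=a_1+a_2\in A$ and write $a_1=\sum r_kx_k$. Then $a\equiv\sum r_ky_k+a_2 \pmod B$, which lies in $\operatorname{im}\psi_2$.

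Injectivity of $\psi_2$ is the main step. I would build a left inverse. Define $\rho:A\to A_2/B_2$ by $\rho(a_1+a_2)=\theta(a_1+B_1)+a_2+B_2$. This map is an $R$-homomorphism, and it is homogeneous because $\theta$ and the projections are. It kills $B_1$ and $B_2$, and it sends $x_k-y_k$ to $\theta(x_k+B_1)-(y_k+B_2)=0$. So $\rho$ kills $B$ and induces $\bar\rho:A/B\to A_2/B_2$. Since $\bar\rho\psi_2(a_2+B_2)=a_2+B_2$, the map $\psi_2$ is injective with inverse $\bar\rho$. Therefore $\psi_2$ is a homogeneous isomorphism, and its inverse $\bar\rho$ is homogeneous too.

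The only delicate point is the degree-matching choice of the $y_k$ (the statement's ``choose the homogeneous element $y_k$'' should be read as having $\deg y_k=\deg x_k$, or as allowing $y_k=0$). Everything else is formal.
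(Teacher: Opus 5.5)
Your proof is correct and follows the paper's route. The paper notes only that $\psi_2$ is homogeneous and cites the ungraded \cite[Lemma 8.13]{so00} for bijectivity and $\psi_2\theta=\psi_1$; you write that ungraded argument out (surjectivity plus the left inverse $\bar\rho$) with the homogeneity checks explicit, and you correctly make explicit that $y_k$ should be chosen in $A_{2,\deg x_k}$, which the paper leaves implicit when it calls $B$ graded.
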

\begin{proof}
Since $\psi_2$ is homogeneous, it remains to show that it is an isomorphism, which in turn follows from \cite[Lemma 8.13]{so00} that $\psi_2$ is an isomorphism and $\psi_2\theta=\psi_1$.
\end{proof}

\begin{proposition}\label{8.14}
Assume that $\Gamma$ is a cancelation monoid, $R=\bigoplus_{\alpha \in \Gamma}R_{\alpha}$ is a $\Gamma$-graded ring and $E$ is a $\Gamma$-graded left $R$-module. With the above notation, there exists a graded $(C,D)$-subquotient system with the following properties:
\begin{enumerate}
  \item $C/D\cong E$ in the category of $\Gamma$-graded left $R$-modules.
  \item $A_i/B_i$ is $\Gamma$-graded finitely presented for all $i\in \mathcal{I}$.
  \item If $F$ is $\Gamma$-graded finitely presented, and if $\eta:F\to C/D$ is a homogeneous homomorphism, then there exist an $i\in \mathcal{I}$ and a homogeneous isomorphism $\eta':F\to A_i/B_i$ such that the triangle
      \begin{displaymath}
\xymatrix{
A_i/B_i \ar[rd]^{\psi_i} &   \\
  & C/D\\
 F \ar[uu]^{\eta'}\ar[ru]_{\eta} & }
\end{displaymath}
      commutes.
  \item If $i\in \mathcal{I}$ and $\psi_i=\rho\sigma$, where $\sigma:A_i/B_i\to F$ and $\rho:F\to C/D$ are homogeneous homomorphisms, with $F$ a $\Gamma$-graded finitely presented module, then there exist $j\in \mathcal{I}$, $j\geq i$, and a homogeneous isomorphism $\tau:F\to A_j/B_j$ such that the diagram
      \begin{displaymath}
\xymatrix{
& A_j/B_j \ar[rd]^{\psi_j} &   \\
A_i/B_i\ar[ru]^{\varphi_{ij}} \ar[rd]_{\sigma}& & C/D\\
& F \ar[uu]^{\tau}\ar[ru]_{\eta} & }
\end{displaymath}
      commutes.
\end{enumerate}
\end{proposition}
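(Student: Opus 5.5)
We follow the construction of \cite[Proposition 8.14]{so00}, making every choice homogeneous. Fix a homogeneous generating set $\{e_\lambda\}_{\lambda\in\Lambda}$ of $E$. Let $X$ be an index set of cardinality large enough, e.g.\ $|\Lambda\times\mathbb{N}|$, and use it to build infinitely many copies. Let $C$ be the gr-free module $\bigoplus_{(\lambda,n)\in\Lambda\times\mathbb{N}}Rc_{\lambda,n}$ with $\deg(c_{\lambda,n})=\deg(e_\lambda)$. Let $\pi:C\to E$ be the homogeneous epimorphism $c_{\lambda,n}\mapsto e_\lambda$, and put $D:=\ker\pi$. Then $D$ is a graded submodule of $C$, and (1) holds because $C/D\cong E$ homogeneously. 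Take $\mathcal{A}$ to be the family of graded submodules of $C$ generated by finitely many of the basis elements $c_{\lambda,n}$. Take $\mathcal{B}$ to be the family of finitely generated graded submodules of $D$; each such submodule is generated by finitely many homogeneous elements. We check the subquotient axioms. Both families are directed under inclusion. $C$ is the union of $\mathcal{A}$, and $D$ is the union of $\mathcal{B}$, since $D$ is graded and every element of $D$ lies in a finitely generated graded submodule. A finitely generated $B\in\mathcal{B}$ involves only finitely many basis elements, so it lies in some $A\in\mathcal{A}$. For $i=(A,B)\in\mathcal{I}$, the module $A$ is finitely generated gr-free and $B$ is finitely generated by homogeneous elements. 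Hence $A_i/B_i$ is finitely presented in the graded category, which gives (2).

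For (3), let $F$ be graded finitely presented, with homogeneous presentation $F\cong A'/B'$. Here $A'$ is gr-free on homogeneous $x_1,\dots,x_n$ and $B'$ is generated by finitely many homogeneous relations. Write $\eta(x_k+B')=\pi(y_k)+D$ with $y_k\in C$ homogeneous of degree $\deg x_k$, where $y_k$ is a combination of the $c_{\lambda,n}$. The plan is to send each $x_k$ to a fresh basis element, not to $y_k$ itself. Each $\eta(x_k)$ is a homogeneous element of $E$, say $\eta(x_k)=\sum_\lambda r_{k\lambda}e_\lambda$ with homogeneous coefficients. This is where the problem lies: a fresh basis element $c_{\lambda,n}$ maps to a generator $e_\lambda$, not to an arbitrary homogeneous element, so we need basis elements lying over arbitrary homogeneous elements of $E$. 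We therefore modify the construction. Index the basis of $C$ by pairs $(e,n)$, where $e$ runs over all of $H(E)$ and $n\in\mathbb{N}$, with $\deg c_{e,n}=\deg e$ and $\pi(c_{e,n})=e$. Now choose distinct unused indices $n_k$ and send $x_k\mapsto c_{\eta(x_k),n_k}$. This is a homogeneous injection of $A'$ onto a direct summand $A\in\mathcal{A}$ generated by these basis elements. Let $B$ be the image of $B'$. Then $B$ is finitely generated graded, and $B\subseteq D$ because $\eta$ is well defined. This yields $i=(A,B)$ and a homogeneous isomorphism $\eta':F\to A/B$ with $\psi_i\eta'=\eta$.

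For (4), let $i=(A_i,B_i)$ with $\psi_i=\rho\sigma$. Apply the construction of (3) to $\rho$, using basis elements disjoint from those of $A_i$; infinitely many copies keep fresh ones available. This gives a summand $A_2$ and $B_2$ with $F\cong A_2/B_2$ compatibly with $\rho$. Set $A=A_i\oplus A_2$. Apply Lemma \ref{8.13} with $\theta=\sigma$ transported to $A_2/B_2$, with homogeneous generators $x_k$ of $A_i$ and homogeneous lifts $y_k\in A_2$. This produces $B\supseteq B_i+B_2$, generated by $B_i$, $B_2$ and the elements $x_k-y_k$. We need $B\subseteq D$. This holds since $\pi(x_k-y_k)$ corresponds to $\psi_i(x_k)-\rho\sigma(x_k)=0$. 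So $j=(A,B)\ge i$ lies in $\mathcal{I}$. Lemma \ref{8.13} gives a homogeneous isomorphism $\psi_2:A_2/B_2\to A/B$ with $\psi_2\sigma=\varphi_{ij}$. Take $\tau$ to be $\psi_2$ composed with $F\cong A_2/B_2$. Then $\tau\sigma=\varphi_{ij}$ and $\psi_j\tau=\rho$, which is the required commutativity.

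The main obstacle is the bookkeeping of degrees in (3) and (4): all lifts must be chosen homogeneous of the correct degree. This is possible because $\pi$ is a homogeneous surjection and the basis elements over $H(E)$ carry matching degrees.
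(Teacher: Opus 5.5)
Your argument is correct and is essentially the paper's proof. Once you switch to the basis $c_{(e,n)}$ with $e\in H(E)$ and $n\in\mathbb{N}$, you use exactly the paper's $C$ and the same families $\mathcal{A},\mathcal{B}$, and for (4) you apply Lemma~\ref{8.13} after realizing $F$ on fresh basis elements. The only difference is that you prove (3) directly and feed it into (4), whereas the paper notes that (3) is the special case $i=(0,0)$ of (4) and defers (4) to the ungraded proof.

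One small point, which the paper also leaves implicit: $0\in H(E)$ has no well-defined degree, so you should let $0$ occur in every degree, e.g.\ by indexing the basis by pairs $(\alpha,e)$ with $e\in E_\alpha$. This ensures that a generator $x_k$ with $\eta(x_k)=0$ still gets a fresh basis element of degree $\deg x_k$ lying in $D$.
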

\begin{proof}
Let $C$ be the gr-free $\Gamma$-graded left $R$-module with homogeneous basis $\{c_{(e,n)}\mid e\in H(E), n\in \mathbb{N}\}$ such that $\deg(c_{(e,n)})=\deg(e)$ for each $e\in H(E), n\in \mathbb{N}$. Let $\pi:C\to E$ be the homogeneous epimorphism defined by $\pi(c_{(e,n)})=e$. Hence $D:=\ker\pi$ is a graded submodule of $C$. Let $\mathcal{A}$ be the family of graded submodules of $C$ generated by finite subsets of $\{c_{(e,n)}\mid e\in H(E), n\in \mathbb{N}\}$ (hence finitely generated gr-free), and let $\mathcal{B}$ be the family of finitely generated graded submodules of $D$. With this setup we have a graded $(C,D)$-subquotient system and $C/D\cong E$ as $\Gamma$-graded $R$-modules, while each $A_i/B_i$ is graded finitely presented. Hence (1) and (2) are true.

Part (3) is a special case of (4), and the proof of (4) follows as in the ungraded case; see the proof of part (d) in \cite[Proposition 8.14]{so00} using Lemma \ref{8.13} instead of \cite[Lemma 8.13]{so00}.
\end{proof}

Assume that $F$ and $E$ are $\Gamma$-graded left $R$-modules. It is easy to see that there exists a natural group homomorphism $$\Phi_{F,E}:\sideset{^*}{_{R}}{\Hom}(F,R)\otimes_RE\to \sideset{^*}{_{R}}{\Hom}(F,E),$$ defined by $\Phi_{F,E}(f\otimes e)=(\Phi_{F,E})_{f\otimes e}$ in which $(\Phi_{F,E})_{f\otimes e}(x)=f(x)\cdot e$, for $f\in\sideset{^*}{_{R}}{\Hom}(F,R)$ and $e\in E$, such that $$\Phi_{F,E}((\sideset{^*}{_{R}}{\Hom}(F,R)\otimes_RE)_{\alpha})\subseteq \Hom_{\alpha}(F,E),$$ for each $\alpha\in\Gamma$.
It is easy to see that $\Phi_{-,E}:\sideset{^*}{_{R}}{\Hom}(-,R)\otimes_RE\to \sideset{^*}{_{R}}{\Hom}(-,E)$ is a natural transformation.

\begin{theorem}(Graded version of Lazard's Theorem)\label{lazard}
Assume that $\Gamma$ is a cancelation monoid, $R=\bigoplus_{\alpha \in \Gamma}R_{\alpha}$ is a $\Gamma$-graded ring and $E$ is a $\Gamma$-graded left $R$-module. The following are equivalent:
\begin{enumerate}
  \item $E$ is gr-flat.
  \item For all graded finitely presented left $R$-module $F$, $\Phi_{F,E}$ is isomorphic.
  \item For all graded finitely presented left $R$-module $F$, $\Phi_{F,E}$ is epimorphic.
  \item $E$ is a direct limit of finitely generated gr-free $R$-modules.
\end{enumerate}
\end{theorem}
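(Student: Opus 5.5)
I would prove the cycle $(4)\Rightarrow(1)\Rightarrow(2)\Rightarrow(3)\Rightarrow(4)$, following the ungraded proof in \cite[Theorem 8.11 and the surrounding results]{so00}.

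\emph{$(4)\Rightarrow(1)$.} A finitely generated gr-free module $\bigoplus_{k=1}^n Re_k$ is free, hence flat, hence gr-flat. Lemma \ref{flat} then gives that a direct limit of such modules is gr-flat.

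\emph{$(1)\Rightarrow(2)$.} First, $\Phi_{F,E}$ is an isomorphism when $F=Re$ is gr-free of rank one with $\deg e=\alpha$. In that case $\sideset{^*}{_{R}}{\Hom}(Re,R)\cong R$ and $\sideset{^*}{_{R}}{\Hom}(Re,E)\cong E$, via $f\mapsto f(e)$; the degree shift is harmless by cancelation. Under these identifications $\Phi_{Re,E}$ becomes the canonical map $R\otimes_RE\to E$. Both sides are additive in $F$, so the same holds for finitely generated gr-free $F$.

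For a graded finitely presented $F$, take a graded presentation $F_1\to F_0\to F\to 0$ by finitely generated gr-free modules. Since $\sideset{^*}{_{R}}{\Hom}(-,R)$ is left exact, $0\to \sideset{^*}{_{R}}{\Hom}(F,R)\to \sideset{^*}{_{R}}{\Hom}(F_0,R)\to \sideset{^*}{_{R}}{\Hom}(F_1,R)$ is exact. Tensoring with $E$ keeps it exact, because $E$ is gr-flat and these are graded right $R$-modules. (Exactness at the middle term needs flatness applied to the image, which is the usual argument.) On the other side, $0\to \sideset{^*}{_{R}}{\Hom}(F,E)\to \sideset{^*}{_{R}}{\Hom}(F_0,E)\to\sideset{^*}{_{R}}{\Hom}(F_1,E)$ is exact by left exactness. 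Naturality of $\Phi_{-,E}$ gives a commutative ladder, and the five lemma shows $\Phi_{F,E}$ is an isomorphism.

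\emph{$(2)\Rightarrow(3)$.} This is trivial.

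\emph{$(3)\Rightarrow(4)$.} This is the main step. I would use the graded $(C,D)$-subquotient system from Proposition \ref{8.14}: $E\cong C/D\cong\varinjlim_{i\in\mathcal I} A_i/B_i$ by Proposition \ref{8.12}. Let $\mathcal J=\{i\in\mathcal I\mid A_i/B_i \text{ is gr-free}\}$. If $\mathcal J$ is cofinal in $\mathcal I$, then Lemma \ref{lim}(3) gives $E\cong\varinjlim_{j\in\mathcal J}A_j/B_j$, a direct limit of finitely generated gr-free modules.

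To prove cofinality, fix $i\in\mathcal I$ and set $P=A_i/B_i$, which is graded finitely presented. Since $P$ is finitely generated, $\psi_i$ is a finite sum of homogeneous components in $\sideset{^*}{_{R}}{\Hom}(P,E)$. By (3), $\psi_i=\Phi_{P,E}(\sum_k f_k\otimes e_k)$ with $f_k\in\sideset{^*}{_{R}}{\Hom}(P,R)$ and $e_k\in E$ homogeneous. The aim is the factorization $\psi_i=\rho\sigma$, where $\sigma:P\to F=\bigoplus_k R u_k$ is given by $\sigma(x)=\sum_k f_k(x)u_k$ and $\rho(u_k)=e_k$. Then Proposition \ref{8.14}(4) yields $j\ge i$ with $A_j/B_j\cong F$ gr-free.

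The obstacle I expect is the grading bookkeeping. Since $f_k$ has some degree $\delta_k$ and $\Gamma$ is only a noncommutative cancelation monoid, $\sigma$ must be homogeneous of degree $\varepsilon$. This forces $\deg u_k$ to be chosen so that $\deg(f_k(x))\deg u_k=\deg x$. I would pick $\deg u_k=\deg e_k$, decompose $\psi_i$, which has degree $\varepsilon$, into homogeneous pieces, and keep only those terms $f_k\otimes e_k$ of total degree $\varepsilon$. Here $\Phi$ respects degrees, so equality in degree $\varepsilon$ can be extracted from $\psi_i=\Phi_{P,E}(\sum_k f_k\otimes e_k)$. I would then check that $\sigma$ and $\rho$ are homogeneous of degree $\varepsilon$, using the identity $f_k(x)e_k\in E_{\deg x}$ and the cancelation law to match degrees.
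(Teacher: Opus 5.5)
Your outline follows the paper's proof step for step. You prove (4)$\Rightarrow$(1) via Lemma~\ref{flat}, and (1)$\Rightarrow$(2) via a graded presentation and naturality of $\Phi_{-,E}$. You prove (3)$\Rightarrow$(4) by showing $\mathcal J$ is cofinal through the factorization $\psi_i=\rho\sigma$, taking $\deg u_k=\deg e_k$ and keeping only the degree-$\varepsilon$ terms; this is the paper's condition $\alpha_k\beta_k=\varepsilon$.

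The step that fails is the base case of (1)$\Rightarrow$(2). The map $f\mapsto f(e)$ does not identify $\sideset{^*}{_{R}}{\Hom}(Re,R)$ with $R$ unless $\alpha=\deg e$ is invertible in $\Gamma$. A homogeneous $f$ of degree $\delta$ sends $e$ into $R_{\alpha\delta}$, so the image is only $\bigoplus_{\gamma\in\alpha\Gamma}R_\gamma$. Cancelation makes $\delta\mapsto\alpha\delta$ injective, not surjective, so the ``degree shift'' is really a truncation. Consequently $\Phi_{Re,E}$ becomes the multiplication map $\bigl(\bigoplus_{\gamma\in\alpha\Gamma}R_\gamma\bigr)\otimes_RE\to\bigoplus_{\gamma\in\alpha\Gamma}E_\gamma$, which need not be onto.

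This cannot be patched, because when $\Gamma$ is not a group the implication (1)$\Rightarrow$(3) is false. Take $\Gamma=\mathbb{N}_0$, $R=k[t]$ with the standard grading, and $F=E=Re$ with $\deg e=1$. Then $E$ is gr-free, hence flat and gr-flat, and $F$ is graded finitely presented. Every $f\in\Hom_\delta(F,R)$ satisfies $f(e)\in R_{1+\delta}\subseteq tR$. So every map in the image of $\Phi_{F,E}$ sends $e$ into $tRe$, and $1_F\in\Hom_0(F,E)$ is not in the image.

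More generally, $F=E=Re$ with $\deg e$ non-invertible always fails. In a cancelation monoid, $\gamma\delta=\varepsilon$ forces $\delta$ to be a unit, and $F_\delta=0$ for every unit $\delta$. Hence the degree-$\varepsilon$ component of $\sideset{^*}{_{R}}{\Hom}(F,R)\otimes_RF$ is zero, while $1_F$ has degree $\varepsilon$ and $\Phi$ preserves degrees.

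The paper's proof of (1)$\Rightarrow$(2) simply defers to the ungraded argument, so it rests on the same base case and has the same defect. Your proof is correct when $\Gamma$ is a group. For a general cancelation monoid, (1)$\Rightarrow$(4), and with it Corollary~\ref{lazard2}, would need an argument that bypasses (2) and (3).
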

\begin{proof}
(1)$\Rightarrow$(2) is obtained using the naturality of $\Phi_{-,E}$ as the ungraded case (see \cite[Proposition 4.18]{so00}), and (2)$\Rightarrow$(3) is trivial.

(3)$\Rightarrow$(4) Let $\mathcal{I}$, $A_i$, $B_i$, $\psi_i$, $\varphi_{ij}$ be as in Proposition \ref{8.14}, and set $$\mathcal{J}:=\{j\in \mathcal{I}\mid A_j/B_j\text{ is gr-free}\}.$$ Assume that $i\in\mathcal{I}$. Then $\psi_i\in\Hom_{\varepsilon}(A_i/B_i,C/D)$ lies in the image of $\Phi_{A_i/B_i,C/D}$. Hence there are $\sigma_k\in\Hom_{\alpha_k}(A_i/B_i,R)$, $k=1,\ldots,n$, and homogeneous elements $\overline{y}_1,\ldots,\overline{y}_n\in C/D$, with $\deg(\overline{y}_k)=\beta_k$, $\alpha_k\beta_k=\varepsilon$, such that $\Phi_{A_i/B_i,C/D}(\Sigma_{k=1}^n\sigma_k\otimes \overline{y}_k)=\psi_i$. Then $\psi_i(\overline{x})=\Sigma_{k=1}^n\sigma_k(\overline{x})\overline{y}_k$. Define homogeneous homomorphisms $\sigma=(\sigma_1,\ldots,\sigma_n):A_i/B_i\to L$, where $L:=\bigoplus_{k=1}^nRe_k$ is gr-free with $\deg(e_k)=\beta_k$, and $\rho:L\to C/D$ by $\rho(r_1,\ldots,r_n)=\Sigma_{k=1}^nr_k\overline{y}_k$. Thus $\psi_i=\rho\sigma$, so by Proposition \ref{8.14}, there exist $j\in \mathcal{I}$, $j\geq i$, and a homogeneous isomorphism $\tau:L\to A_j/B_j$, for which the diagram
\begin{displaymath}
\xymatrix{
& A_j/B_j \ar[rd]^{\psi_j} &   \\
A_i/B_i\ar[ru]^{\varphi_{ij}} \ar[rd]_{\sigma}& & C/D\\
& L \ar[uu]^{\tau}_{\cong}\ar[ru]_{\eta} & }
\end{displaymath}
is commutative. So, $A_j/B_j\cong L$ is a gr-free $\Gamma$-graded $R$-module, hence $j\in \mathcal{J}$. This means that $\mathcal{J}$ is cofinal in $\mathcal{I}$. Hence by Lemma \ref{lim}(3) and Proposition \ref{8.12}, $\mathcal{J}$ is directed, and $$E\cong C/D\cong\underset{i\in \mathcal{I}}{\varinjlim}A_i/B_i\cong\underset{j\in \mathcal{J}}{\varinjlim}A_j/B_j.$$ (4)$\Rightarrow$(1) follows from Lemma \ref{flat}.
\end{proof}

The following corollary generalizes \cite[Proposition 3.2]{ff74} in the case where $\Gamma=\mathbb{Z}$ and \cite[Exercise 11, Page 72]{no04} in the case where $\Gamma$ is a group.

\begin{corollary}\label{lazard2}
Assume that $\Gamma$ is a cancelation monoid, and that $R=\bigoplus_{\alpha \in \Gamma}R_{\alpha}$ is a $\Gamma$-graded ring. A $\Gamma$-graded left $R$-module $F$ is gr-flat if and only if it is a flat $R$-module.
\end{corollary}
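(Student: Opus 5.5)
The backward implication is already noted in the text: if $F$ is a $\Gamma$-graded left $R$-module which is flat, then tensoring with $F$ preserves all monomorphisms, in particular homogeneous monomorphisms of graded right modules. Since $f\otimes 1_F$ is automatically homogeneous, $F$ is gr-flat. So the content lies in the forward implication, and the plan is to route it through Theorem \ref{lazard} and then forget the grading.

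Assume $F$ is gr-flat. By the implication (1)$\Rightarrow$(4) of Theorem \ref{lazard}, $F\cong\varinjlim_{j\in\mathcal{J}}L_j$ as $\Gamma$-graded modules, where each $L_j$ is a finitely generated gr-free $R$-module and the transition maps are homogeneous. Each $L_j$ has a homogeneous $R$-basis, so forgetting the grading it is a finitely generated free $R$-module, hence flat. The transition maps are in particular $R$-linear.

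The one point needing care is that the graded direct limit agrees with the ordinary direct limit of the underlying $R$-modules. By the construction recalled in Section 2, the graded limit is the ordinary one, $\bigoplus_j L_j$ modulo the usual relations. These relations are generated by homogeneous elements, and the grading is simply $M_\alpha=\varinjlim_j L_{j\alpha}$. So the underlying $R$-module of the graded limit is $\varinjlim_j L_j$ computed in the category of $R$-modules.

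Finally, the usual fact that a direct limit of flat modules over a directed set is flat (Tor, or tensor, commutes with direct limits, and direct limits are exact) shows that $F$ is flat. The main obstacle is therefore entirely absorbed in Theorem \ref{lazard}; the only remaining check is the compatibility of the forgetful functor with directed colimits, which is immediate from the construction.
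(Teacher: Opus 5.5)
Your proposal is correct and is essentially the paper's own proof: invoke (1)$\Rightarrow$(4) of Theorem \ref{lazard}, forget the grading, and use that a direct limit of finitely generated free modules is flat. Your explicit check that the graded direct limit has the ordinary direct limit as its underlying module is a point the paper leaves implicit.

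One caution, which applies equally to the paper: the proof of Theorem \ref{lazard} via $\Phi_{F,E}$ breaks down when $\Gamma$ is not a group. For example, take $R=k[x]$ graded by $\mathbb{N}_0$ and $F=E=Re$ with $\deg(e)=1$; every map in the image of $\Phi_{F,E}$ sends $e$ into $xE$, so the identity of $E$ is not in that image. The implication (1)$\Rightarrow$(4) you rely on is therefore better justified by a graded equational criterion for gr-flat modules, which does work over any cancelation monoid.
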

\begin{proof}
One implication is clear by the definition. For the other, suppose $F$ is gr-flat. Then, Theorem \ref{lazard} shows that the gr-flat module $F$ is the direct limit of a direct system of finitely generated gr-free modules (in the category of $\Gamma$-graded left $R$-modules) and hence, forgetting the grading, $F$ is flat by \cite[Corollary 8.11]{so00}.
\end{proof}

\begin{remark}\label{directflat}{\em
(1) As in the ungraded case, it can be seen that every graded left $R$-module is a direct limit of all finitely generated graded submodules. Hence, using Lemma \ref{flat}, a graded left module is (gr-)flat if all of its finitely generated graded submodules are gr-flat.

(2) It follows from \cite[Theorem 3.56]{ro09}, Proposition \ref{proj} and Corollary \ref{lazard2}, that a finitely presented graded left $R$-module is gr-flat if and only if it is gr-projective. In particular, if $R$ is graded left Noetherian, then a finitely generated graded left $R$-module is gr-flat if and only if it is gr-projective.
}
\end{remark}

\section{Graded hereditary rings}

Let $\Gamma$ be a cancelation monoid with neutral element $\varepsilon$ and $R=\bigoplus_{\alpha \in \Gamma}R_{\alpha}$ be a $\Gamma$-graded ring. Graded-Dedekind domains were introduced in \cite{no82}, in the case where $\Gamma=\mathbb{Z}$ and in \cite{ac16}, in the case where $\Gamma$ is a torsionless commutative cancelation monoid. Recall that a commutative graded integral domain $R$ is called a \emph{graded-Dedekind} domain if each nonzero homogeneous ideal of $R$ is invertible. We remind that an ideal $I$ in a commutative integral domain $D$ with quotient field $K$ is called invertible if $II^{-1}=D$, where $I^{-1}=\{x\in K\mid xI\subseteq D\}$. Notice also that a nonzero ideal of a commutative integral domain is invertible if and only if it is projective \cite[Proposition 4.21]{ro09}. Hence, $R$ is a graded-Dedekind domain if and only if each nonzero homogeneous ideal of $R$ is (gr-)projective. In this section, we consider this property in a general $\Gamma$-graded ring.

\begin{definition} Assume that $\Gamma$ is a cancelation monoid. A $\Gamma$-graded ring $R=\bigoplus_{\alpha \in \Gamma}R_{\alpha}$ is graded left (resp. right) hereditary if every left (resp. right) homogeneous ideal is gr-projective.
\end{definition}

It is clear from the definition that the graded hereditary rings among commutative graded integral domains are precisely the graded-Dedekind rings.

\begin{example}\label{ex}{\em
\begin{enumerate}
  \item We can give a graduation to Small's example, which provides us a graded right hereditary ring that is not a graded left hereditary ring. Indeed, the \emph{triangular ring}
$$T:=\left(\begin{array}{cc}
                        \mathbb{Z} & \mathbb{Q} \\
                        0 & \mathbb{Q}
\end{array}\right)$$ is an $\mathbb{N}_0$-graded ring, where  $\mathbb{N}_0$ is the additive monoid of nonnegative integers, with
$T_0=\left(\begin{array}{cc}
                        \mathbb{Z} & 0 \\
                        0 & \mathbb{Q}
\end{array}\right)$,
$T_1=\left(\begin{array}{cc}
                        0 & \mathbb{Q} \\
                        0 & 0
\end{array}\right)$
and $T_i=0$ for $i>1$. According to \cite[Page 46]{la98}, all right ideals of $T$ are projective; it follows that $T$ is graded right hereditary. But the left ideal
$\left(\begin{array}{cc}
                        0 & \mathbb{Q} \\
                        0 & 0
\end{array}\right)$, which is homogeneous, is not projective. This means that $T$ is not a graded left hereditary ring.
  \item Let $k$ be a field and $R$ be a free algebra generated over $k$ by $\{x_i\mid i\in I\}$. By \cite[Page 45]{la98}, $R$ is a (left and right) hereditary ring. Let now $\{n_i\mid i\in I\}$ be a family of elements of $\mathbb{Z}$. Assigning the degree $n_i$ to $x_i$, $R$ becomes a $\mathbb{Z}$-graded ring. Then $R$ is a graded (left and right) hereditary ring.
       \item By (2), $k\langle x, y\rangle$, the free algebra generated over $k$ by $x,y$, is a $\mathbb{Z}$-graded ring, where $\deg(x)=1$ and $\deg(y)=-1$. Assume further $k$ is of characteristic zero. Then the first Weyl algebra $$A_1(k):=k\langle x, y\rangle/(xy-yx-1)$$ is a $\mathbb{Z}$-graded ring. By \cite[Page 45]{la98}, $A_1(k)$ is a (left and right) hereditary ring. Hence $A_1(k)$ is a graded (left and right) hereditary ring.
      \item Let $D$ be a Dedekind domain and $\{X_{\alpha}\}$ be a nonempty set of indeterminates over $D$. If we set $\Gamma:=\bigoplus_{\alpha}\mathbb{Z}_{\alpha}$, where each $\mathbb{Z}_{\alpha}$ is the additive group of integers, then $R=D[\{X_{\alpha},X_{\alpha}^{-1}\}]$ is a $\Gamma$-graded graded-Dedekind domain. However, if $|\{X_{\alpha}\}|\geq2$, then $R$ is not a Dedekind domain (see \cite{ac16}). Therefore a graded left (resp. right) hereditary ring need not be a left (resp. right) hereditary ring.
\end{enumerate}}
\end{example}

Let us first prove the following major result on graded submodules of gr-free left modules over graded left hereditary rings.

\begin{proposition}(Graded version of Kaplansky's Theorem)\label{kaplansky}
Assume that $\Gamma$ is a cancelation monoid, and $R=\bigoplus_{\alpha \in \Gamma}R_{\alpha}$ is a graded left hereditary ring. Then every graded submodule $M$ of a gr-free left $R$-module $F$ with the set of homogeneous basis $X$ is isomorphic to $\bigoplus_{e\in X}I_ee$ for some homogeneous left ideals $I_e$ of $R$.
\end{proposition}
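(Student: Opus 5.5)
We adapt Kaplansky's transfinite argument to the graded setting, taking care at every step that the coordinate maps are homogeneous of the right degree.

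First, well-order the homogeneous basis $X$ of $F=\bigoplus_{e\in X}Re$. For each $e\in X$, put $F_e:=\bigoplus_{e'<e}Re'$ and $\overline{F}_e:=\bigoplus_{e'\leq e}Re'=F_e\oplus Re$. These are graded submodules of $F$, being generated by homogeneous basis elements. Every $x\in\overline{F}_e$ can be written uniquely as $x=b+re$ with $b\in F_e$ and $r\in R$. Define $\pi_e:\overline{F}_e\to R$ by $\pi_e(x)=r$. If $x$ is homogeneous of degree $\gamma$ and $\deg(e)=\alpha$, then by \cite[Lemma 1.1]{li12} and cancelation, $r$ lies in $R_\beta$ with $\beta\alpha=\gamma$. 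Thus $\pi_e$ sends degree-$\beta\alpha$ elements to degree $\beta$, so it is homogeneous after regrading. Set $I_e:=\pi_e(M\cap\overline{F}_e)$. This is a left ideal, and it is homogeneous because $M\cap\overline{F}_e$ is graded (an intersection of graded submodules) and $\pi_e$ maps homogeneous elements to homogeneous elements. Then $I_ee$ is the image of the homogeneous map $M\cap\overline{F}_e\to Re$, $x\mapsto\pi_e(x)e$, and $I_ee\cong I_e$ up to shift. By the hereditary hypothesis, $I_e$ is gr-projective, so $I_ee$ is gr-projective as well, via Proposition \ref{proj}, since projectivity does not depend on the grading shift.

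Next, consider the exact sequence of graded modules with homogeneous maps
$$0\to M\cap F_e\to M\cap\overline{F}_e\to I_ee\to 0.$$
By Proposition \ref{proj}(4), it splits via a homogeneous map. So we can choose a graded submodule $C_e\subseteq M\cap\overline{F}_e$ with $M\cap\overline{F}_e=(M\cap F_e)\oplus C_e$ and a homogeneous isomorphism $C_e\cong I_ee$.

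Finally, we claim $M=\bigoplus_{e\in X}C_e$, which gives $M\cong\bigoplus I_ee$ homogeneously. For independence, suppose $\sum c_{e_k}=0$ is a finite relation with $e_1<\cdots<e_n$ and $c_{e_n}\ne0$. Then $c_{e_n}\in C_{e_n}\cap F_{e_n}$, because the remaining terms lie in $F_{e_n}$; but this intersection is zero, a contradiction. For spanning, use transfinite induction. If some element of $M$ is not in $\sum C_e$, choose such an $x$ whose largest basis index $e$ (that is, with $x\in\overline{F}_e$) is least. Write $x=y+c$ with $y\in M\cap F_e$ and $c\in C_e$. Since $y$ involves only finitely many basis elements, it lies in $\overline{F}_{e'}$ for some $e'<e$. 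By minimality, $y\in\sum C_{e'}$, so $x\in\sum C_e$, a contradiction. We can take $x$ homogeneous without loss, since $M$ is graded.

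The main obstacle is the degree bookkeeping in $\pi_e$. We need $I_e$ to be homogeneous, and we need the splitting to yield a graded isomorphism $C_e\cong I_ee$ rather than one that is merely $R$-linear. Both depend on cancelation in $\Gamma$: a homogeneous $re$ with $e$ of degree $\alpha$ forces $r$ to be homogeneous of the unique degree $\beta$ with $\beta\alpha=\gamma$. The rest follows Kaplansky's ungraded proof.
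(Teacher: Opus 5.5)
Your proposal is correct and follows the paper's proof essentially step for step: the same well-ordered filtration $F_e\subseteq\overline{F}_e=F_e\oplus Re$, the same coordinate projection onto $Re$ with image $I_ee$, and the same homogeneous splitting via Proposition \ref{proj}. You go slightly further than the paper by checking that $I_e$ is homogeneous and by proving $M=\bigoplus_e C_e$ directly (independence and well-ordered induction), whereas the paper defers that final step to \cite[Theorem 4.13]{ro09}.
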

\begin{proof}
Let $X=\{e_i:i\in J\}$ be a basis of homogeneous elements of $F$, and suppose the index set $J$ is well ordered. Set $F_0=0$, where 0 is the smallest index in $J$. For each $j\in J$, with $j>0$, define $$F_j:=\bigoplus_{i<j}Re_i,$$  and $\overline{F}_j=\bigoplus_{i\leq j}Re_i=F_j\oplus Re_j$. Suppose now, $M$ is a graded submodule of $F$. Assume that $\varphi_j$ is the restriction of the natural homogeneous projection $\overline{F}_j\to Re_{j}$ to the graded submodule $M\cap \overline{F}_j$. Then there is an exact sequence of graded $R$-modules and homogeneous homomorphisms $$0\to M\cap F_j\to M\cap \overline{F}_j\to \im(\varphi_j)\to0.$$ Note that $\im(\varphi_j)=I_je_j$ for some homogeneous left ideal $I_j$ of $R$. By assumption, $I_j$ is gr-projective and so $\im(\varphi_j)$ is gr-projective. It follows that the above sequence splits. Then $M\cap \overline{F}_j=(M\cap F_j)\bigoplus N_j$, where $N_j$ is isomorphic to $I_je_j$ as graded $R$-modules. One can see that $M=\bigoplus_{j\in J}N_j$ (see proof of \cite[Theorem 4.13]{ro09}) .
\end{proof}

\begin{corollary}\label{ci}
Assume that $\Gamma$ is a cancelation monoid, and $R=\bigoplus_{\alpha \in \Gamma}R_{\alpha}$ is a graded left hereditary ring. Then every graded submodule of a gr-projective left $R$-module is gr-projective.
\end{corollary}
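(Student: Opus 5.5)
The plan is to reduce the statement to the gr-free case and then invoke Proposition \ref{kaplansky}. Let $P$ be a gr-projective left $R$-module and $M$ a graded submodule of $P$. By Proposition \ref{proj} ((2)$\Rightarrow$(5)), $P$ is a graded direct summand of a gr-free left $R$-module $F$. Concretely, there is a gr-free $F$ with homogeneous basis $X$, a graded submodule $Q$ with $F=P\oplus Q$, and a homogeneous inclusion $P\hookrightarrow F$. The image of $M$ under this inclusion is then a graded submodule of $F$. Here one checks that $M=\bigoplus_\alpha (M\cap P_\alpha)$ and $P_\alpha\subseteq F_\alpha$, so $M=\bigoplus_\alpha (M\cap F_\alpha)$. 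Hence it suffices to prove that every graded submodule of a gr-free module is gr-projective.

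For the gr-free case, Proposition \ref{kaplansky} gives a homogeneous isomorphism $M\cong\bigoplus_{e\in X}I_ee$, where each $I_e$ is a homogeneous left ideal of $R$. The isomorphism is homogeneous because in the proof each $N_j$ is isomorphic to $I_je_j$ as graded modules, and $M=\bigoplus N_j$ as graded modules. Each $I_ee$ is isomorphic, via $r\mapsto re$, to $I_e$ with its grading shifted by $\deg(e)$: the degree-$\gamma$ part of $I_ee$ is the sum of $(I_e)_\beta e$ over those $\beta$ with $\beta\deg(e)=\gamma$. This is well defined by cancellation, and the map is injective since $e$ is a basis element. By hypothesis $I_e$ is gr-projective, i.e. projective by Proposition \ref{proj}.

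It remains to see that a shifted module $I_ee$ is still gr-projective and that a direct sum of gr-projectives is gr-projective. Both are easiest through Proposition \ref{proj} ((1)$\Leftrightarrow$(2)). Gr-projectivity is equivalent to ordinary projectivity, which ignores the grading, and $I_ee\cong I_e$ as ungraded $R$-modules. So each $I_ee$ is projective, hence $\bigoplus_e I_ee$ is projective, hence $M$ is projective. Since $M$ is graded, $M$ is gr-projective by (1)$\Rightarrow$(2).

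The main point needing care is verifying that $M\subseteq P\subseteq F$ is genuinely a graded submodule of $F$, so that Proposition \ref{kaplansky} applies, and that the decomposition is one of graded modules. Routing the argument through the equivalence projective $\Leftrightarrow$ gr-projective avoids any subtlety about degree shifts in a monoid, where only right translation $\beta\mapsto\beta\deg(e)$ is available.
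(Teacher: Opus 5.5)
Your proof is correct and follows essentially the same route as the paper: embed $M$ in a gr-free module via Proposition~\ref{proj}, then apply the graded Kaplansky decomposition of Proposition~\ref{kaplansky}. The only difference is that you spell out what the paper's one-line ``by Theorem~\ref{kaplansky} and the assumption'' leaves implicit, namely why the shifted summands $I_ee$ and their direct sum are gr-projective, which you handle cleanly by passing through the equivalence projective $\Leftrightarrow$ gr-projective of Proposition~\ref{proj}.
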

\begin{proof}
Assume that $M$ is a graded submodule of a gr-projective left $R$-module. Hence it is a graded submodule of a gr-free left $R$-module by part $(2)\Leftrightarrow(5)$ of Proposition \ref{proj}. By Theorem \ref{kaplansky} and the assumption, we obtain that $M$ is gr-projective.
\end{proof}

Recall that a commutative graded integral domain $R=\bigoplus_{\alpha \in \Gamma}R_{\alpha}$ is a \emph{graded-PID} if each homogeneous ideal of $R$ is principal. Obviously, graded-PIDs are graded-Dedekind domains. As in Example \ref{ex}(4), let $D$ be a PID and $\{X_{\alpha}\}$ be a nonempty set of indeterminates over $D$. Then $D[\{X_{\alpha},X_{\alpha}^{-1}\}]$ is a $\bigoplus_{\alpha}\mathbb{Z}_{\alpha}$-graded-PID, but it is not a PID if $|\{X_{\alpha}\}|\geq2$ (see \cite{ac16}).

\begin{corollary}\label{}
Assume that $\Gamma$ is a cancelation monoid and $R=\bigoplus_{\alpha \in \Gamma}R_{\alpha}$ is a graded-PID. Then:
\begin{enumerate}
  \item Every graded submodule of a gr-free left $R$-module is gr-free.
  \item Every gr-projective left $R$-module is gr-free.
\end{enumerate}
\end{corollary}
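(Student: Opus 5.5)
For part (1), I will apply the graded version of Kaplansky's Theorem (Proposition \ref{kaplansky}); its hypothesis holds because every graded-PID is graded-Dedekind and hence graded hereditary. Let $M$ be a graded submodule of a gr-free module $F$ with homogeneous basis $X$. Then $M\cong\bigoplus_{e\in X}I_ee$ as graded modules, where each $I_e$ is a homogeneous ideal of $R$. It therefore suffices to show that each summand $I_ee$ is either $0$ or gr-free of rank one, since a direct sum of gr-free modules is gr-free: the union of the homogeneous bases is a homogeneous basis.

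Because $R$ is a graded-PID, each nonzero $I_e$ equals $Ra_e$ for some $a_e$. The first point to check is that $a_e$ can be taken homogeneous. A principal homogeneous ideal is generated by homogeneous elements, so finitely many of them already generate it. I would then like to deduce that a single homogeneous generator suffices. This is where I expect the main obstacle. One route is to read ``graded-PID'' as meaning that the ideal is generated by one homogeneous element, which is the standard convention. Alternatively, in a graded integral domain, if $I=Ra$ and $a=\sum a_{\alpha_i}$, then each component satisfies $a_{\alpha_i}=r_ia$. Comparing supports with cancellation, and using that $R$ is a domain, should force $a$ to be homogeneous up to this relation. I would cite the convention and not grind through the argument.

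With $a_e$ homogeneous of degree $\delta$, the element $a_ee$ is homogeneous of degree $\delta\deg(e)$. The map $R\to Ra_ee$, $r\mapsto ra_ee$, is injective, because $R$ is a domain and $e$ is a basis element. So $I_ee=R(a_ee)$ is gr-free with homogeneous basis $\{a_ee\}$.

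For part (2), let $P$ be gr-projective. By Proposition \ref{proj} $(2)\Leftrightarrow(5)$, $P$ is a graded direct summand of a gr-free module, and in particular a graded submodule of one. Part (1) then shows that $P$ is gr-free.
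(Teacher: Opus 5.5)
Your proposal follows the paper's proof: Kaplansky's decomposition $M\cong\bigoplus_{e\in X}I_ee$, each nonzero $I_e=Ra_e$ with $a_e$ homogeneous (the paper likewise simply takes this as part of the graded-PID hypothesis), so $I_ee$ is gr-free on $a_ee$, and part (2) via Proposition~\ref{proj}. Relying on that convention rather than on your support-comparison fallback is the right call, because the fallback needs $\Gamma$ to be torsionless (totally ordered): for $\Gamma=\mathbb{Z}/2$ and $R=\mathbb{C}[t,t^{-1}]$ graded by the involution $\sum c_nt^n\mapsto\sum\bar{c}_nt^{-n}$, the homogeneous ideal $(1+t)$ is principal but has no homogeneous generator, and hence is not gr-free.
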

\begin{proof}
(1) Assume that $M$ is a graded submodule of a gr-free left $R$-module $F$ with the set of homogeneous basis $X$. Then, by Theorem \ref{kaplansky}, $M$ is isomorphic to $\bigoplus_{e\in X}I_ee$ for some homogeneous left ideals $I_e$ of $R$. By assumption, each ideal $I_e$ is 0 or $I_e=Ra_e$ for some homogeneous nonzero element $a_e\in R$. In the case that $I_e\neq0$, assume that $a_ee$ is of degree $\alpha$, and that $F=Re'$ is a gr-free $R$-module with homogeneous basis $e'$ of degree $\alpha$. It is clear that $I_ee$ is isomorphic to $F$ as graded $R$-modules. Hence $I_ee$ is gr-free. Therefore $M$ is gr-free.

(2) follows from (1), since every gr-projective left module is a graded direct summand of a gr-free left $R$-module by Proposition \ref{proj}.
\end{proof}

The proof of the following lemma is the same as the ungraded case, so we omit it (see \cite[Page 12]{ce56}).

\begin{lemma}\label{4.18}
Assume that $\Gamma$ is a cancelation monoid and $R=\bigoplus_{\alpha \in \Gamma}R_{\alpha}$ is a graded ring. Then a graded left $R$-module $P$ is gr-projective if and only if  for any diagram of homogeneous homomorphisms of graded left $R$-modules with $Q$ gr-injective
\begin{displaymath}
\xymatrix{  &
P \ar[d]^{f} \ar@{-->}[ld]_{ h} \\
Q \ar[r]_{g} & N \ar[r] &0. }
\end{displaymath}
there is a homogeneous homomorphism $h:P\to Q$ with $gh=f$. The dual is also true.
\end{lemma}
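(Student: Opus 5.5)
The forward direction is immediate: if $P$ is gr-projective, the lifting property holds for every homogeneous epimorphism $g:Q\to N$, in particular when $Q$ is gr-injective. So the work is in the converse, and the plan is to reduce an arbitrary homogeneous epimorphism to one whose domain is gr-injective, using the gr-injective envelopes of Theorem \ref{injectivehull}.

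Assume the lifting property for gr-injective $Q$. Let $g:M\to N$ be any homogeneous epimorphism and $f:P\to N$ homogeneous, and put $K=\ker g$, a graded submodule of $M$. By Theorem \ref{injectivehull} we embed $M$ homogeneously into a gr-injective module $Q$. Let $E\subseteq Q$ be a gr-injective envelope of $K$ (the image of $K$ in $Q$), taken inside $Q$ if possible. If that is awkward, we instead take the envelope abstractly and extend the inclusion $K\to E$ to a homogeneous map $M\to E$, which is possible since $E$ is gr-injective. In either case we form a pushout-type module
\[
Q' := (Q\oplus E)/\{(k,-k)\mid k\in K\}
\]
or the analogous construction. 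The point is that $M/K\cong N$ embeds into $Q/E$, or into some quotient of a gr-injective module.

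The cleanest route is as follows. Take $Q$ gr-injective containing $M$, and consider the homogeneous surjection $Q\to Q/K$. Then $N\cong M/K\hookrightarrow Q/K$ homogeneously. The obstacle is that the hypothesis needs the target $N$ to be the image of the gr-injective module itself, and $Q/K$ is larger than $N$. So we apply the hypothesis to the composite $P\xrightarrow{f}N\hookrightarrow Q/K$ and the epimorphism $\pi:Q\to Q/K$. This gives a homogeneous $h:P\to Q$ with $\pi h=\iota f$. Then for $x\in P$ we have $h(x)+K\in M/K$, so $h(x)\in M+K=M$. Hence $h$ lands in $M$, and $gh=f$ follows from the identification $M/K=N$. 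The hypothesis is really used only for the epimorphism $Q\to Q/K$, whose target need not be gr-injective; this is fine, since the lemma demands only that $Q$ be gr-injective. The key check, and the only delicate point, is that $h(P)\subseteq M$ because $\pi^{-1}(M/K)=M$, and that all maps are homogeneous, which holds since $K\subseteq M\subseteq Q$ are graded submodules.

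For the dual we argue symmetrically. Suppose $E$ has the extension property for all homogeneous monomorphisms $0\to L\to P$ with $P$ gr-projective. Given an arbitrary homogeneous monomorphism $L\to M$ and $f:L\to E$, choose a gr-free module $F$ with a homogeneous epimorphism $p:F\to M$ (as recalled before Proposition \ref{proj}). Set $L'=p^{-1}(L)$, a graded submodule of $F$. The hypothesis applied to $L'\hookrightarrow F$ and $f\circ p|_{L'}$ gives $h:F\to E$. Since $h$ vanishes on $\ker p\subseteq L'$, it factors through $M$, and the factored map extends $f$. Hence $E$ is gr-injective by definition.
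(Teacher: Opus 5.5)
Your argument is correct and is essentially the standard Cartan--Eilenberg proof the paper cites. For the main direction you embed $M$ in a gr-injective $Q$, lift $P\to N\cong M/K\hookrightarrow Q/K$ along $Q\to Q/K$, and note the lift lands in $M$ because $K\subseteq M$; for the dual you pull back along a gr-free cover $F\to M$ and use that $p^{-1}(L)$ is a graded submodule. The exploratory material about gr-injective envelopes of $K$ and the pushout $Q'$ is never used and should be deleted.
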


\begin{theorem}(Graded version of Cartan-Eilenberg's Theorem)\label{cartan}
Assume that $\Gamma$ is a cancelation monoid. The following statements are equivalent for a $\Gamma$-graded ring $R=\bigoplus_{\alpha \in \Gamma}R_{\alpha}$:
\begin{enumerate}
  \item $R$ is graded left hereditary.
  \item Every graded submodule of a gr-projective left $R$-module is gr-projective.
  \item Every quotient of a gr-injective left $R$-module by a graded submodule is gr-injective.
\end{enumerate}
\end{theorem}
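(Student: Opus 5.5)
We prove $(1)\Rightarrow(2)$, $(2)\Rightarrow(1)$, and $(2)\Leftrightarrow(3)$ by dualizing with Lemma \ref{4.18}.

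\emph{$(1)\Leftrightarrow(2)$.} The implication $(1)\Rightarrow(2)$ is exactly Corollary \ref{ci}. For $(2)\Rightarrow(1)$, note that $R$ is gr-free with homogeneous basis $\{1\}$ of degree $\varepsilon$, hence gr-projective. A homogeneous left ideal is a graded submodule of $R$, so it is gr-projective by (2).

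\emph{$(2)\Rightarrow(3)$.} Let $Q$ be gr-injective, $K\subseteq Q$ a graded submodule, and $\pi:Q\to Q/K$ the homogeneous projection. We use the dual form of Lemma \ref{4.18}: a graded module $E$ is gr-injective if and only if every diagram $0\to N\stackrel{g}{\to} P$ with $P$ gr-projective and $f:N\to E$ homogeneous admits a homogeneous extension $P\to E$. So take a gr-projective $P$, a graded submodule $N\subseteq P$, and a homogeneous $f:N\to Q/K$. By (2), $N$ is gr-projective. Since $\pi$ is a homogeneous epimorphism, $f$ lifts to a homogeneous $f_1:N\to Q$ with $\pi f_1=f$. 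Since $Q$ is gr-injective, $f_1$ extends to a homogeneous $h_1:P\to Q$. Then $\pi h_1$ extends $f$, so $Q/K$ is gr-injective.

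\emph{$(3)\Rightarrow(2)$.} This is the mirror argument. Let $P$ be gr-projective and $N\subseteq P$ a graded submodule. By Lemma \ref{4.18}, it suffices to lift homogeneous maps $f:N\to Q/K$ along $\pi:Q\to Q/K$ with $Q$ gr-injective. Compose with the inclusion to get $N\to Q/K$. By (3), $Q/K$ is gr-injective, so this map extends to a homogeneous $f':P\to Q/K$. Since $P$ is gr-projective, $f'$ lifts to a homogeneous $h':P\to Q$ with $\pi h'=f'$. Then $h'|_N$ lifts $f$, so $N$ is gr-projective.

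\emph{Main obstacle.} Everything rests on Lemma \ref{4.18} and its dual, taken from the paper. The ungraded proof of the dual needs every graded module to embed homogeneously in a gr-injective module, which Theorem \ref{injectivehull} provides. The projective direction needs every graded module to be a homogeneous image of a gr-projective (indeed gr-free) module, which holds by the construction recalled before Proposition \ref{proj}. So the work is checking that each lift and extension above is homogeneous, and this follows from the definitions of gr-projective and gr-injective together with Proposition \ref{inj}.
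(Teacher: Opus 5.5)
Your proposal is correct and follows the paper's route. Like the paper, you get $(1)\Rightarrow(2)$ from Corollary~\ref{ci}, $(2)\Rightarrow(1)$ from $R$ being gr-free, and $(2)\Leftrightarrow(3)$ from the ungraded Cartan--Eilenberg argument run through Lemma~\ref{4.18} and its dual, with all lifts and extensions checked to be homogeneous.
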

\begin{proof}
(1)$\Rightarrow$(2) Follows from Corollary \ref{ci}.

(2)$\Rightarrow$(1) Note that $R$ is gr-free $R$-module with homogeneous basis $\{1\}$, and hence is a gr-projective $R$-module.

(3)$\Leftrightarrow$(2) Follows as in the ungraded case, see the proof of \cite[Theorem 4.19]{ro09} using Lemma \ref{4.18} and its dual, instead of \cite[Lemma 4.18]{ro09}.
\end{proof}

The next result characterizes graded left hereditary rings in terms of flatness in the case of graded left Noetherian rings.

\begin{proposition}\label{}
Assume that $\Gamma$ is a cancelation monoid and $R=\bigoplus_{\alpha \in \Gamma}R_{\alpha}$ is a graded left Noetherian ring. Then $R$ is graded left hereditary if and only if every homogeneous left ideal is flat.
\end{proposition}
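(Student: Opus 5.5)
The forward implication is immediate from the earlier results. If $R$ is graded left hereditary, then every homogeneous left ideal $I$ is gr-projective. By Proposition \ref{proj}, gr-projective is the same as projective, and every projective module is flat. Alternatively, a gr-projective module is a graded direct summand of a gr-free module; gr-free modules are gr-flat (since $M\otimes_R R\cong M$ homogeneously), a graded direct summand of a gr-flat module is gr-flat, and Corollary \ref{lazard2} then gives flatness.

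For the converse, I would assume every homogeneous left ideal $I$ is flat and show that $I$ is gr-projective, which is exactly the definition of graded left hereditary.

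1. Since $R$ is graded left Noetherian, $I$ is finitely generated. I would choose finitely many homogeneous generators $x_1,\ldots,x_n$ of $I$. This is possible because $I$ is generated by homogeneous elements and finitely many generators suffice, so one can take the homogeneous components of a finite generating set.

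2. Take the gr-free module $F=\bigoplus_{k=1}^n Re_k$ with $\deg(e_k)=\deg(x_k)$, and the homogeneous epimorphism $F\to I$ given by $e_k\mapsto x_k$. Its kernel $K$ is a graded submodule of $F$.

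3. The main point is to show that $K$ is finitely generated, so that $I$ is graded finitely presented. Here I would use graded Noetherianity beyond ideals. An induction on $n$, via the exact sequences $0\to F_{n-1}\cap K\to K\to \pi_n(K)\to 0$ with $\pi_n(K)$ a homogeneous left ideal (up to the shift by $e_n$), shows that every graded submodule of a finitely generated gr-free module is finitely generated. This is the step I expect to need the most care, since left gr-Noetherian was only defined for graded left ideals. However, the projection argument from the proof of Proposition \ref{kaplansky} carries over directly: the image of the projection onto $Re_n$ has the form $J e_n$ with $J$ a homogeneous left ideal, which is finitely generated by hypothesis.

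4. Now $I$ is a finitely presented graded module, and it is flat, hence gr-flat. By Remark \ref{directflat}(2), which combines \cite[Theorem 3.56]{ro09} with Proposition \ref{proj} and Corollary \ref{lazard2}, $I$ is gr-projective. Hence $R$ is graded left hereditary.

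Step 4 could equally be done through Theorem \ref{lazard}, where the epimorphism $\Phi_{I,I}$ makes the identity factor through a finitely generated gr-free module. I would simply cite the remark instead.
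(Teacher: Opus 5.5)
Your proof is correct and follows the paper's route: the paper proves this in one line by citing Remark~\ref{directflat}(2), which says that a finitely presented graded module is gr-flat if and only if it is gr-projective, and you use the same remark. Your step~3 adds a detail the paper leaves implicit: over a graded left Noetherian ring, every graded submodule of a finitely generated gr-free module is finitely generated, so each homogeneous left ideal is finitely presented and the remark applies.
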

\begin{proof}
This follows from Remark \ref{directflat}(2).
\end{proof}

Let $\Gamma$ be a cancelation monoid, and $R =\bigoplus_{\alpha \in \Gamma}R_{\alpha}$ be a commutative $\Gamma$-graded integral domain. Set $\supp(R)=\{\alpha\in\Gamma\mid R_{\alpha}\neq0\}$. Hence, $\supp(R)$ is a commutative submonoid of $\Gamma$ and $R$ is a $\supp(R)$-graded ring. Then, it is harmless to assume that $\Gamma$ is a commutative cancelation monoid.

Let $\Gamma$ be a (nontrivial) commutative cancelation monoid and $\langle \Gamma \rangle = \{\alpha/\beta \mid \alpha,\beta \in \Gamma\}$ be the
quotient group of $\Gamma$. Let $R=\bigoplus_{\alpha \in \Gamma}R_{\alpha}$ be a commutative graded integral domain and $H:=\bigcup_{\alpha \in \Gamma}(R_{\alpha} \setminus \{0\})$; so $H$ is the saturated multiplicative set of nonzero homogeneous elements of $R$. Then, the localization $R_H=\bigoplus_{\gamma \in \langle \Gamma \rangle}(R_H)_{\gamma}$ is a commutative $\langle \Gamma \rangle$-graded integral domain given by $(R_H)_{\gamma}=\{r/s\mid r,s\in H, \gamma=\deg(r)/\deg(s)\}$, $\gamma\in\langle \Gamma \rangle$.

The special case of the following theorem is \cite[Theorem II.2.1]{no82} for $\Gamma=\mathbb{Z}$.

\begin{theorem}\label{}
Assume that $\Gamma$ is a commutative cancelation monoid. A commutative graded integral domain $R$ is a graded-Dedekind ring if and only if every gr-divisible left $R$-module is gr-injective
\end{theorem}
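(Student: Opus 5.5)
We follow the ungraded argument of \cite[Theorem 4.24]{ro09}, using the graded Baer criterion (Theorem \ref{baer}) and the graded Cartan--Eilenberg theorem (Theorem \ref{cartan}). Since $R$ is a commutative domain, every nonzero element is a (right) nonzero divisor. By the remark before the statement we may replace $\Gamma$ by $\supp(R)$, so $\Gamma$ is commutative.

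For ($\Rightarrow$), let $M$ be gr-divisible. By Theorem \ref{baer} it suffices to take a nonzero homogeneous ideal $I$ and a homogeneous $f:I\to M$ of degree $\alpha$, and produce $m\in M_\alpha$ with $f(r)=rm$ for all $r\in I$. Since $R$ is graded-Dedekind, $II^{-1}=R$ with $I^{-1}\subseteq K$. Because $I$ is homogeneous, $I^{-1}$ is a graded $R$-submodule of $R_H$: if $x\in I^{-1}$ and $s\in I$ is homogeneous, writing $x$ in homogeneous components of $R_H$ and using $xs\in R$ together with the homogeneity of $I$ shows each component lies in $I^{-1}$. Hence we can write $1=\sum_{k} a_kb_k$ with $a_k\in I$ homogeneous of degree $\delta_k$, $b_k\in (R_H)_{\delta_k^{-1}}$, and $b_ka_k\in R_\varepsilon$. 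Each $b_k$ has the form $c_k/s_k$ with $c_k,s_k\in H$. Using gr-divisibility of $M$, choose $z_k\in M$ homogeneous of the appropriate degree with $s_kz_k=f(a_k)$; the degrees work because $\deg f(a_k)=\delta_k\alpha$ and $\Gamma$ is cancellative. Set $m:=\sum_k c_kz_k$, so that ``$m=\sum_k b_kf(a_k)$''. The degree of $c_kz_k$ is $\deg c_k\cdot\deg f(a_k)/\deg s_k=\delta_k^{-1}\delta_k\alpha=\alpha$, computed in $\langle\Gamma\rangle$ and then pulled back to $\Gamma$ by cancellation. For $r\in I$ we then check
\[
s_k\,r c_kz_k=rc_kf(a_k)=f(rc_ka_k)=s_kf(rb_ka_k),
\]
using that $rb_k\in R$ and $b_ka_k\in R$. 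The obstacle is that we cannot cancel $s_k$ in $M$, since $M$ may have torsion. To avoid this, we do not define $z_k$ by division. Instead we use that $rb_k=t_k\in R$, so that
\[
rc_kz_k=t_ks_kz_k=t_kf(a_k)=f(t_ka_k)=f(rb_ka_k)
\]
holds directly. Summing over $k$ gives $rm=f(r\sum_k b_ka_k)=f(r)$. The only case left to verify is homogeneous $r$, and by additivity this covers all of $I$. The main care needed is the degree bookkeeping in $\langle\Gamma\rangle$ and checking that $I^{-1}$ is graded.

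For ($\Leftarrow$), we use Theorem \ref{cartan}(3). Let $E$ be gr-injective and $N$ a graded submodule. By Proposition \ref{injdiv}, $E$ is gr-divisible, and a homogeneous quotient of a gr-divisible module is gr-divisible: lift $y\in(E/N)_\beta$ to $E_\beta$ and divide there. Thus $E/N$ is gr-divisible, hence gr-injective by hypothesis. Theorem \ref{cartan} then shows $R$ is graded hereditary, so every nonzero homogeneous ideal is projective. By \cite[Proposition 4.21]{ro09} it is invertible, and therefore $R$ is graded-Dedekind.
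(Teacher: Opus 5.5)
There is a genuine gap in the ($\Rightarrow$) direction, at the choice of $z_k$. Gr-divisibility only lets you divide $y\in M_\beta$ by a nonzero $s\in R_\sigma$ when $\beta=\sigma\gamma$ for some $\gamma\in\Gamma$. Since $b_k=c_k/s_k\in(R_H)_{\delta_k^{-1}}$, you have $\deg s_k=\deg(c_k)\,\delta_k$ in $\Gamma$. Solving $s_kz_k=f(a_k)$ with $f(a_k)\in M_{\delta_k\alpha}$ therefore requires, after cancelling $\delta_k$, that $\deg(c_k)$ divide $\alpha$ in $\Gamma$. Cancellation gives only uniqueness of such a quotient degree, not its existence. ``Computing in $\langle\Gamma\rangle$ and pulling back'' is not legitimate, because $M$ has no components in degrees outside $\Gamma$.

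With an arbitrary representation $b_k=c_k/s_k$ the step really fails. Take $\Gamma=\mathbb{N}_0$, $R=k[X]$ and $M=R$; this $M$ is gr-divisible in the paper's sense, since $cX^a\cdot (d/c)X^{b-a}=dX^b$. Let $I=(X)$, let $f$ be the inclusion (degree $0$), and use $1=X\cdot X^{-1}$ with $X^{-1}$ written as $X/X^2$. You would then need a homogeneous $z$ with $X^2z=X$, and none exists. Your argument is fine when $\Gamma$ is a group, but the statement is about monoids.

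The repair is to use the specific representation $b_k=(b_ka_k)/a_k$, that is, $s_k=a_k$ and $c_k=b_ka_k\in R_\varepsilon$; terms with $b_k=0$ can be dropped. Then gr-divisibility gives $z_k\in M_\alpha$ with $a_kz_k=f(a_k)$, since the required quotient degree is exactly $\alpha$. Setting $m=\sum_k (b_ka_k)z_k\in M_\alpha$, your computation goes through directly: $rm=\sum_k (rb_k)a_kz_k=\sum_k (rb_k)f(a_k)=f(r)$ for all $r\in I$. This is precisely the paper's proof, which divides $f(a_i)$ by the elements $a_i\in I$ rather than by denominators from $R_H$.

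The rest of your proposal is sound. Your argument that $I^{-1}$ is a graded submodule of $R_H$, together with taking the degree-$\varepsilon$ part of $1=\sum a_kb_k$, is a correct self-contained replacement for the external result on invertible ideals that the paper cites and then homogenizes. Your ($\Leftarrow$) direction coincides with the paper's.
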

\begin{proof}
Assume that every gr-divisible $R$-module is gr-injective and that $E$ is a gr-injective left $R$-module. Then, $E$ is gr-divisible by Lemma \ref{injdiv}. Since every quotient of $E$ by a graded submodule is a gr-divisible $R$-module, hence it is gr-injective by the assumption. Whence by Theorem \ref{cartan}, $R$ is a graded hereditary integral domain, i.e., $R$ is a graded-Dedekind domain.

Conversely, assume that $R$ is a graded-Dedekind domain and $E$ is a gr-divisible $R$-module. Then by Theorem \ref{baer}, it suffices to complete the diagram
\begin{displaymath}
\xymatrix{
0\ar[r]  & I \ar@{^{(}->}[r]\ar[d]_{f} & R \ar@{-->}[dl]^{}\\
& E, &}
\end{displaymath}
where $I$ is a homogeneous ideal of $R$. Then, $I$ is invertible, and by \cite[Proposition 2.5]{aa82}, there are $a_1,\ldots,a_n\in I$ and $q_1,\ldots,q_n\in R_H$ such that $q_iI\subseteq R$ and $1=\sum_{i}q_ia_i$. Let $q_i=r_i/s$ for $r_i\in R$ and $s\in H$, $i=1,\ldots,n$. Then $r_iI\subseteq sR$ and $s=\sum_{i}r_ia_i$. Decomposing $r_i$ and $a_i$ into homogeneous components, we deduce that $r'_iI\subseteq sR$ and $s=\sum_{i}r'_ia'_i$ for some $r'_i\in H$ and $a'_i\in H\cap I$. Now letting $q'_i:=r'_i/s$, we have $q'_iI\subseteq R$ and $1=\sum_{i}q'_ia'_i$. Hence we can assume that $a_i$ and $q_i$s are homogeneous elements. Since $E$ is gr-divisible, there are homogeneous elements $e_i\in E$ such that $f(a_i)=a_ie_i$. Now, for $b\in I$, we have $$f(b)=f(\sum_iq_ia_ib)=\sum_i(q_ib)f(a_i)=\sum_i(q_ib)a_ie_i=b\sum_i(q_ia_i)e_i.$$ Hence if $e:=\sum_i(q_ia_i)e_i$, then $e\in E$ and $f(b)=be$ for all $b\in I$. Now define $h:R\to E$ by $h(r)=re$, which extends $f$ and $E$ is gr-injective.
\end{proof}

\section{Graded semihereditary rings}

Let $\Gamma$ be a cancelation monoid with neutral element $\varepsilon$ and $R=\bigoplus_{\alpha \in \Gamma}R_{\alpha}$ be a $\Gamma$-graded ring. Recall that a commutative $\Gamma$-graded integral domain $R$ is a \emph{graded-Pr\"{u}fer domain} if each nonzero finitely generated homogeneous ideal of $R$ is invertible, equivalently (gr-)projective.

\begin{definition} A graded ring $R$ is graded left (resp. right) semihereditary if every finitely generated left (resp. right) homogeneous ideal is gr-projective.
\end{definition}

It is clear from the definition that the graded semihereditary rings among commutative graded integral domains are precisely the graded-Pr\"{u}fer domains.

\begin{example}\label{}{\em
\begin{enumerate}
\item Obviously, every graded left hereditary ring is graded left semihereditary.
\item Small's example (see Example \ref{ex}(1)) is graded left semihereditary (see \cite[Page 46]{la98}).
\item We can give a graduation to Chase's example, which provides us a graded left semihereditary, but not a graded right semihereditary ring. Let $S$ be a von Neumann regular and nonsemisimple ring. Hence there is an ideal $I$ of $S$ such that, as a submodule of the right $S$-modules $S$, $I$ is not a direct summand. Let $R:=S/I$, which is also a von Neumann regular ring. The triangular ring
$$T:=\left(\begin{array}{cc}
                        R & R \\
                        0 & S
\end{array}\right)$$ is an $\mathbb{N}_0$-graded ring, with
$T_0=\left(\begin{array}{cc}
                        R & 0 \\
                        0 & S
\end{array}\right)$,
$T_1=\left(\begin{array}{cc}
                        0 & R \\
                        0 & 0
\end{array}\right)$
and $T_i=0$ for $i>1$. According to \cite[Page 47]{la98}, $T$ is left semihereditary; it follows that $T$ is graded left semihereditary. But the right ideal
$\left(\begin{array}{cc}
                        0 & R \\
                        0 & 0
\end{array}\right)$, which is homogeneous, is not projective. This means that $T$ is not a graded right semihereditary ring.
\item Let $D$ be a Pr\"{u}fer domain which is not a field and set $R:=D[X,X^{-1}]$, for an indeterminate $X$ over $D$. Then by \cite[Example 3.6]{ac13}, $R$ is a $\mathbb{Z}$-graded-Pr\"{u}fer domain which is not Pr\"{u}fer. Therefore a graded (left, right) semihereditary ring need not be a (left, right) semihereditary ring.
\end{enumerate}}
\end{example}

\begin{proposition}\label{direct}
Assume that $\Gamma$ is a cancelation monoid, and $R=\bigoplus_{\alpha \in \Gamma}R_{\alpha}$ is a graded left semihereditary ring. Then every finitely generated graded submodule $M$ of a gr-free left $R$-module $F$ with the set of homogeneous basis $X$ is isomorphic to $I_1e_1+\cdots+I_ne_n$ for some finitely generated homogeneous left ideals $I_1,\ldots,I_n$ of $R$ and $e_1,\ldots,e_n\in X$.
\end{proposition}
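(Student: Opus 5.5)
Since $M$ is finitely generated by homogeneous elements, each generator is a finite $R$-combination of basis elements, and only finitely many elements of $X$ occur. Call them $e_1,\ldots,e_n$ and put $F'=Re_1\oplus\cdots\oplus Re_n$. Then $F'$ is a finitely generated gr-free graded submodule of $F$ containing $M$, so we may replace $F$ by $F'$. From this point on I follow the filtration argument of Proposition \ref{kaplansky}, with the finite index set $\{1,\ldots,n\}$ in place of the well-ordered set $J$, and I argue by induction on $n$.

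For $j=1,\ldots,n$ set $\overline{F}_j=Re_1\oplus\cdots\oplus Re_j$ and $F_j=\overline{F}_{j-1}$, with $F_1=0$. Let $\varphi_j:M\cap\overline{F}_j\to Re_j$ be the restriction of the homogeneous projection $\overline{F}_j\to Re_j$. This gives an exact sequence of graded modules and homogeneous maps
\[0\to M\cap F_j\to M\cap\overline{F}_j\to \im(\varphi_j)\to 0,\]
and $\im(\varphi_j)=I_je_j$ for a homogeneous left ideal $I_j$. The key new point is the top step $j=n$. There $M\cap\overline{F}_n=M$ is finitely generated, so $\im(\varphi_n)$ is finitely generated. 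Because $r\mapsto re_n$ is an isomorphism $R\to Re_n$ up to a degree shift, $I_n$ is a finitely generated homogeneous left ideal. By graded left semiheredity $I_n$ is gr-projective, and hence so is $I_ne_n\cong I_n(\deg e_n)$. Proposition \ref{proj}(4) then splits the sequence by a homogeneous map, so $M\cong (M\cap F_n)\oplus N_n$ with $N_n\cong I_ne_n$ as graded modules.

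Now $M\cap F_n$ is a graded direct summand of $M$, hence a homogeneous image of $M$, hence finitely generated. It is also a graded submodule of the gr-free module $Re_1\oplus\cdots\oplus Re_{n-1}$. The induction hypothesis gives $M\cap F_n\cong I_1e_1\oplus\cdots\oplus I_{n-1}e_{n-1}$ with each $I_j$ finitely generated homogeneous. Combining this with the splitting yields $M\cong I_1e_1\oplus\cdots\oplus I_ne_n$, where the sum $I_1e_1+\cdots+I_ne_n$ inside $F$ is automatically direct. The base case $n=1$ is just the statement for $I_1e_1$ treated above.

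The main obstacle is finite generation of the intermediate pieces. In the hereditary case every ideal is projective, so this issue never arises. Here the argument has to be arranged top-down, peeling off the last coordinate first, so that the piece $M\cap F_j$ being passed to the induction is a direct summand of a finitely generated module. Filtering bottom-up would not work, because $M\cap\overline{F}_j$ for small $j$ need not be finitely generated a priori. A smaller point is to check that the isomorphisms are homogeneous. This uses that $\deg(e_j)$ only shifts degrees and that the splitting from Proposition \ref{proj} is homogeneous.
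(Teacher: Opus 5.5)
Your proposal is correct and follows essentially the same route as the paper. The paper likewise reduces to a finitely generated gr-free module, inducts on the number of basis elements, splits off the last coordinate via the Kaplansky-type sequence, and uses that $M\cap F_n$ is a direct summand of $M$ (hence finitely generated) to apply the induction hypothesis. You also make explicit a point the paper leaves implicit: $I_n$ is finitely generated as the image of $M$, and this is what allows graded left semiheredity to split the sequence.
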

\begin{proof}
Let $X=\{e_i:i\in I\}$ be a basis of homogeneous elements of $F$, and let $M=\langle x_1,\ldots,x_m\rangle$ be a graded finitely generated submodule of $F$. Each $x_k$ is a finite linear combination of $e_i$'s. Collect all of these $e_i$'s as a set $Y$ which is finite and $M\subseteq\langle Y\rangle$. Hence $\langle Y\rangle$ is a gr-free submodule of $F$, and so we may assume that $F$ is finitely generated with homogeneous basis $\{e_1,\ldots,e_n\}$, and $\alpha_i:=\deg(e_i)$.

We prove, by induction on $n\geq1$, that $M$ is isomorphic to $I_1e_1+\cdots+I_ne_n$ for some finitely generated homogeneous left ideals $I_1,\ldots,I_n$ of $R$. If $n=1$, then there exists a finitely generated homogeneous left ideal $I$ of $R$ such that $M=Ie_1$. Assume that $n>1$ and that the assertion holds for $n-1$. Now by the notation of the proof of Theorem \ref{kaplansky}, $M=(M\cap F_n)\oplus N_n$, where $F_n=\oplus_{i=1}^{n-1}Re_i$ and $N_n\cong I_ne_n$ for some homogeneous left ideal $I_n$ of $R$. Being a direct summand of $M$, $M\cap F_n$ is a finitely generated graded submodule of $F_n$. Hence, the induction hypothesis completes the proof.
\end{proof}

\begin{theorem}\label{Albrecht}
Assume that $\Gamma$ is a cancelation monoid. Then a graded ring $R=\bigoplus_{\alpha \in \Gamma}R_{\alpha}$ is graded left semihereditary if and only if every finitely generated graded submodule $M$ of a gr-projective left $R$-module $P$ is gr-projective.
\end{theorem}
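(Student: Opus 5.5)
The reverse implication is immediate. $R$ is gr-free with homogeneous basis $\{1\}$, so it is gr-projective by Proposition \ref{proj}. A finitely generated homogeneous left ideal is a finitely generated graded submodule of $R$, so the hypothesis makes it gr-projective, and $R$ is graded left semihereditary.

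For the forward implication, let $R$ be graded left semihereditary and let $M$ be a finitely generated graded submodule of a gr-projective module $P$. By Proposition \ref{proj}(5) there is a gr-free module $F$ with $F=P\oplus Q$ as graded modules, and $P$ is a graded submodule of $F$. Hence $M$ is a finitely generated graded submodule of the gr-free module $F$. Proposition \ref{direct} then gives a graded isomorphism
\[
M\cong I_1e_1+\cdots+I_ne_n,
\]
where $I_1,\dots,I_n$ are finitely generated homogeneous left ideals and $e_1,\dots,e_n$ are basis elements. Since the $e_k$ are independent, this sum is direct.

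Next, each $I_ke_k$ is gr-projective. The map $I_k\to I_ke_k$, $r\mapsto re_k$, is a bijective $R$-homomorphism of degree $\deg(e_k)$. Thus $I_ke_k$ is $I_k$ with its grading shifted by $\deg(e_k)$; more precisely, $(I_ke_k)_{\gamma}=\sum_{\beta\deg(e_k)=\gamma}(I_k)_\beta e_k$. By hypothesis $I_k$ is projective, so by the equivalence (1)$\Leftrightarrow$(2) of Proposition \ref{proj} the graded module $I_ke_k$ is gr-projective. This holds because projectivity does not depend on the grading, and Proposition \ref{proj} converts ungraded projectivity into gr-projectivity for any grading. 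A finite direct sum of gr-projective modules is gr-projective, for example by the lifting property or by Proposition \ref{proj}(1). Hence $M$ is gr-projective.

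The main point to check is that the isomorphism produced by Proposition \ref{direct} is homogeneous and that the sum there is direct. Both rest on the inductive splittings $M\cap\overline{F}_j=(M\cap F_j)\oplus N_j$ from the proof of Proposition \ref{kaplansky}. These splittings are given by homogeneous maps, because $\im(\varphi_j)$ is gr-projective and Proposition \ref{proj}(4) applies. If anything fails at this step, I would bypass it by applying Proposition \ref{proj}(1)$\Leftrightarrow$(2) once more: it suffices that $M$ is projective as an ungraded module, and this follows from the ungraded direct-sum decomposition.
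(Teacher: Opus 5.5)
Your proof is correct and follows the paper's own argument. You embed $P$ as a graded summand of a gr-free module, apply Proposition \ref{direct} to get $M\cong I_1e_1\oplus\cdots\oplus I_ne_n$, and conclude gr-projectivity from that of the finitely generated homogeneous ideals $I_k$, with the converse handled via $R$ itself. Your added detail on the degree-shifted isomorphism $I_k\cong I_ke_k$ and the appeal to Proposition \ref{proj}(1)$\Leftrightarrow$(2) simply fill in the step the paper compresses into ``using Proposition \ref{proj}''.
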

\begin{proof}
Assume that $R$ is graded left semihereditary and that $M$ a finitely generated graded submodule of a gr-projective left $R$-module $P$. Since $P$ is gr-projective, it is a graded submodule, even a summand, of a gr-free left $R$-module $F$. Hence $M$ is a finitely generated graded submodule of $F$. By Proposition \ref{direct}, $M$ is isomorphic to $I_1e_1+\cdots+I_ne_n$ for some finitely generated homogeneous left ideals $I_1,\ldots,I_n$ of $R$ and $e_1,\ldots,e_n$ elements of a homogeneous basis of $F$. As each of these $I_i$s is gr-projective, using Proposition \ref{proj}, $M$ is gr-projective too. Conversely, every finitely generated homogeneous left ideal is a finitely generated graded submodule of the gr-free $R$-module $R$. Hence, such ideals are gr-projective, and $R$ is graded left semihereditary.
\end{proof}

A graded ring $R$ is called a \emph{graded left coherent ring} if every finitely generated graded left
ideal is finitely presented. Using \cite[Proposition 3.11]{ro09}, every left semihereditary ring is left coherent.

\begin{proposition}\label{coh-semi}
Assume that $\Gamma$ is a cancelation monoid and that $R=\bigoplus_{\alpha \in \Gamma}R_{\alpha}$ is a $\Gamma$-graded ring. Then $R$ is graded left semihereditary if and only if $R$ is graded left coherent and every homogeneous left ideal is gr-flat.
\end{proposition}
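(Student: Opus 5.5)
Both directions reduce, via Remark \ref{directflat}, Proposition \ref{proj} and Corollary \ref{lazard2}, to the fact that a finitely presented graded module is gr-flat if and only if it is gr-projective. The graded finite-presentation discussion after Proposition \ref{proj} is what lets ordinary finite presentation be replaced by graded finite presentation.

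($\Rightarrow$) Assume $R$ is graded left semihereditary. First I show $R$ is graded left coherent. Let $I$ be a finitely generated homogeneous left ideal, and take a homogeneous epimorphism $\pi:F\to I$ from a finitely generated gr-free module. Since $I$ is gr-projective, the sequence $0\to\ker\pi\to F\to I\to0$ splits via a homogeneous map by Proposition \ref{proj}(4). So $\ker\pi$ is a graded direct summand of $F$, hence a homogeneous image of $F$ and finitely generated. Thus $I$ is finitely presented in the graded sense.

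Next, let $J$ be an arbitrary homogeneous left ideal. Then $J$ is the directed union, i.e. the direct limit, of its finitely generated homogeneous subideals (Remark \ref{directflat}(1)). Each of these is gr-projective, hence projective, hence flat, hence gr-flat. By Lemma \ref{flat}, $J$ is gr-flat.

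($\Leftarrow$) Assume $R$ is graded left coherent and every homogeneous left ideal is gr-flat. Let $I$ be a finitely generated homogeneous left ideal. By coherence, $I$ is finitely presented in the graded category, and in particular finitely presented as an ungraded module. It is gr-flat, so it is flat by Corollary \ref{lazard2}. A finitely presented flat module is projective by \cite[Theorem 3.56]{ro09}, and hence $I$ is gr-projective by Proposition \ref{proj}. Therefore $R$ is graded left semihereditary.

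The only step needing care is the coherence part of ($\Rightarrow$), where the finitely generated kernel must be obtained in the graded setting. The homogeneous splitting from Proposition \ref{proj} handles this directly. Alternatively, Schanuel's lemma shows the kernel is finitely generated for every choice of finite homogeneous generators, exactly as in the paragraph following Proposition \ref{proj}.
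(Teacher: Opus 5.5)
Your proof is correct and follows the paper's argument. For $(\Rightarrow)$ the paper obtains coherence from the fact that finitely generated projectives are finitely presented, citing \cite[Proposition 3.11]{ro09}; your homogeneous-splitting argument is just a graded version of that citation. It then gets gr-flatness of all homogeneous ideals from Remark \ref{directflat}(1), as you do. For $(\Leftarrow)$ the paper invokes Remark \ref{directflat}(2), which is precisely the chain Corollary \ref{lazard2}, \cite[Theorem 3.56]{ro09}, Proposition \ref{proj} that you unpack.
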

\begin{proof}
Assume first that $R$ is graded left semihereditary. Then every finitely generated homogeneous left ideal $I$ is (gr-)projective; hence, $I$ is finitely presented, by \cite[Proposition 3.11]{ro09}. Therefore $R$ is graded left coherent. Since every finitely generated homogeneous left ideal is (gr-)projective, it is (gr-)flat. It follows from Remark \ref{directflat}(1) that every
graded left ideal is gr-flat.

Conversely, if $I$ is a finitely generated homogeneous left ideal, then $I$ is a graded submodule of the gr-flat module $R$, and so $I$ is gr-flat; since $R$ is left coherent, $I$ is also finitely presented. Hence, $I$ is projective, by Remark \ref{directflat}(2), and so $R$ is graded left semihereditary.
\end{proof}

Recall that the monoid $\Gamma$ is \emph{torsionless} if from $\gamma^n=\gamma'^n$, where $n\geq1$ is an integer and $\gamma,\gamma'\in\Gamma$, follows $\gamma=\gamma'$. It is well known that the cancellation monoid $\Gamma$ is torsionless if and only if it can be given a total order $\leq$ compatible with the monoid operation \cite[page 123]{no68}.

From now on we assume that $\Gamma$ is a torsionless commutative cancellation monoid. Let $R=\bigoplus_{\alpha \in \Gamma}R_{\alpha}$ be a commutative $\Gamma$-graded integral domain and $M$ be a $\Gamma$-graded left $R$-module. The \emph{torsion submodule of $M$} is defined as
$$T(M):=\{x\in M\mid rx=0,\text{ for some }0\neq r\in R\}.$$ Since $R$ is a commutative integral domain, $T(M)$ is a submodule of $M$. The $R$-module $M$ is called \emph{torsion-free} if $T(M)=0$. In the following lemma we show that $T(M)$ is a graded submodule.

\begin{lemma}
Assume that $\Gamma$ is a torsionless commutative cancelation monoid, $R=\bigoplus_{\alpha \in \Gamma}R_{\alpha}$ is a commutative $\Gamma$-graded integral domain and $M$ is a $\Gamma$-graded left $R$-module. Then $T(M)$ is a graded submodule of $M$.
\end{lemma}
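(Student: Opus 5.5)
We need to show that if $x=x_{\alpha_1}+\cdots+x_{\alpha_n}\in T(M)$, with $x_{\alpha_i}\in M_{\alpha_i}$ nonzero and $\alpha_1<\cdots<\alpha_n$, then each $x_{\alpha_i}\in T(M)$. Here $<$ is a total order on $\Gamma$ compatible with the operation; it exists because $\Gamma$ is torsionless. Since $T(M)$ is a submodule, it suffices to show that some homogeneous component of $x$ (say the top one, $x_{\alpha_n}$) is torsion. Then $x-x_{\alpha_n}\in T(M)$ has fewer components, and induction on $n$ finishes the proof.

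\textbf{Key step: the top component is torsion.} Suppose $rx=0$ with $0\neq r\in R$. I would argue by induction on the number of homogeneous components of $r$ that the top component $x_{\alpha_n}$ is killed by some nonzero homogeneous element. Write $r=r_{\beta_1}+\cdots+r_{\beta_m}$ with $\beta_1<\cdots<\beta_m$ and every $r_{\beta_j}\neq0$.

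\emph{Comparing top degrees.} Strict compatibility of the order gives $\beta_j\alpha_i<\beta_m\alpha_n$ whenever $(j,i)\neq(m,n)$. This uses cancellation: in a cancellative monoid with compatible total order, $\beta<\beta'$ implies $\beta\alpha<\beta'\alpha$. Hence the degree-$\beta_m\alpha_n$ component of $rx$ is exactly $r_{\beta_m}x_{\alpha_n}$, so $r_{\beta_m}x_{\alpha_n}=0$.

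\emph{Base case.} If $m=1$, we are done, since $r_{\beta_m}$ is then a nonzero homogeneous annihilator of $x_{\alpha_n}$.

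\emph{Inductive step.} If $m>1$, multiply the relation $rx=0$ by $r_{\beta_m}$. This gives
\[
0=r_{\beta_m}rx=r_{\beta_m}r'x,\qquad r'=r-r_{\beta_m},
\]
because $r_{\beta_m}r_{\beta_m}x=r_{\beta_m}^2x$ ... so this naive cancellation is not quite right. The cleaner route is a second induction on $n$.

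\textbf{Induction on $n$ for the top component.}

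\emph{Case $r_{\beta_m}x=0$.} Then $x\in T(M)$ is killed by a nonzero homogeneous element, and so is every component $x_{\alpha_i}$. To see this, note that $r_{\beta_m}x_{\alpha_i}$ are the homogeneous components of $r_{\beta_m}x$, and they lie in distinct degrees $\beta_m\alpha_i$ by cancellation. So each $r_{\beta_m}x_{\alpha_i}$ is zero.

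\emph{Case $r_{\beta_m}x\neq0$.} Set $y:=r_{\beta_m}x=\sum_{i<n}r_{\beta_m}x_{\alpha_i}$, using $r_{\beta_m}x_{\alpha_n}=0$. Since $ry=r_{\beta_m}rx=0$ by commutativity, $y$ is a torsion element with fewer than $n$ components. By the induction hypothesis (applied to the full statement that all components of a torsion element with fewer components are killed by nonzero homogeneous elements), each $r_{\beta_m}x_{\alpha_i}$ with $i<n$ is killed by some nonzero homogeneous $s_i$. Then the product $s=\prod_i s_i\,r_{\beta_m}$ is a nonzero homogeneous element, because $R$ is a domain, and $sx=0$. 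By the previous case, all components of $x$ are then killed by $s$. The base case $n=1$ is trivial.

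\textbf{Conclusion and main obstacle.} The statement proved by induction on $n$ is therefore: every torsion element with $n$ components is annihilated by a nonzero homogeneous element. This immediately gives $x_{\alpha_i}\in T(M)$ for all $i$, so $T(M)=\bigoplus_\alpha (T(M)\cap M_\alpha)$. The main obstacle is the degree-comparison step: it is where torsionlessness (through the total order) and cancellation are used, and it must be checked that the order is strictly compatible.
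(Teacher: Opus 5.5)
Your proof is correct and is essentially the paper's argument, except that the paper compares lowest rather than highest degrees: the lowest-degree component of $rx$ is the product of the lowest components of $r$ and $x$, so the lowest component of $x$ is killed by a nonzero homogeneous element, and one then subtracts it and inducts exactly as in your opening paragraph. Your key step is already complete once you have $r_{\beta_m}x_{\alpha_n}=0$, because $r_{\beta_m}\neq 0$ for every $m$, not only $m=1$; so the abandoned induction on $m$ and the second induction on $n$ are unnecessary. The second induction is valid and gives the slightly stronger fact that a torsion element has a single nonzero homogeneous annihilator, but that also follows from the paper's version by multiplying the annihilators of the components, since $R$ is a domain.
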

\begin{proof}
Assume that $x$ is a nonzero element of $T(M)$, and that $x=x_{\beta_1}+\cdots+x_{\beta_m}$ is the decomposition of $x$ into nonzero homogeneous components with $\beta_1<\cdots<\beta_m$. There exists a nonzero element $r\in R$ such that $rx=0$. Assume also that $r=r_{\alpha_1}+\cdots+r_{\alpha_n}$ is the decomposition of $r$ into nonzero homogeneous components with $\alpha_1<\cdots<\alpha_n$. It follows from $rx=0$ that $r_{\alpha_1}x_{\beta_1}=0$. Hence, $x_{\beta_1}\in T(M)$ and $x-x_{\beta_1}=x_{\beta_2}+\cdots+x_{\beta_m}\in T(M)$. By the same way, we see that $x_{\beta_2},\ldots,x_{\beta_m}\in T(M)$. This completes the proof.
\end{proof}

Let $\Gamma$ be a (nontrivial) commutative cancelation monoid and $\langle \Gamma \rangle = \{\alpha/\beta \mid \alpha,\beta \in \Gamma\}$ be the
quotient group of $\Gamma$. Let $R=\bigoplus_{\alpha \in \Gamma}R_{\alpha}$ be a commutative $\Gamma$-graded integral domain and $H = \bigcup_{\alpha \in \Gamma}(R_{\alpha} \setminus \{0\})$. We observed that the localization $R_H$ is a commutative $\langle \Gamma \rangle$-graded integral domain. Notice that each nonzero homogeneous elements of $R_H$ is invertible.

\begin{lemma}\label{imbed}
Assume that $\Gamma$ is a torsionless commutative cancelation monoid, $R=\bigoplus_{\alpha \in \Gamma}R_{\alpha}$ is a commutative graded integral domain, and $M$ is a graded torsion-free left $R$-module. Then $M$ can be embedded in a gr-free left $R_H$-module. Moreover, if $M$ is finitely generated, then $M$ can be embedded in a finitely generated gr-free left $R$-module.
\end{lemma}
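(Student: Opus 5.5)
The plan is to pass to the graded localization $R_H\otimes_R M=M_H$ and exploit the fact that $R_H$ is a ``graded field'': every nonzero homogeneous element of $R_H$ is invertible. First, $M$ embeds in $M_H$ via $x\mapsto x/1$. The kernel consists of those $x$ with $sx=0$ for some $s\in H$, and this is $0$ because $M$ is torsion-free. The module $M_H$ carries the natural $\langle\Gamma\rangle$-grading $(M_H)_\gamma=\{x/s\mid x\in H(M),\ s\in H,\ \deg(x)/\deg(s)=\gamma\}$. Because $\Gamma$ is cancellative and commutative, this grading is well defined, and the map $x\mapsto x/1$ is homogeneous once $\Gamma$ is viewed inside $\langle\Gamma\rangle$.

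The key step is to show that every graded module $N$ over $R_H$ is gr-free. I would argue as in the vector-space case using Zorn's lemma. Call a set of homogeneous elements of $N$ \emph{independent} if no nontrivial $R_H$-linear relation holds among them. Take a maximal independent set $B$ of homogeneous elements. Suppose some homogeneous $y\in N$ lies outside $\sum_{b\in B}R_Hb$. Maximality gives a relation $ry+\sum r_bb=0$ with $r\neq0$. Comparing homogeneous components of the appropriate degree (the degrees of $y$ and the $b$'s are fixed, and multiplication by degree-$\delta$ elements shifts degrees injectively since $\langle\Gamma\rangle$ is a group), we may take $r$ and the $r_b$ homogeneous with $r\neq0$. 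Since $r$ is then invertible, $y\in\sum R_Hb$, a contradiction. Hence $B$ is a homogeneous basis. The component-extraction in this argument is the one delicate point, so I would phrase it carefully: pick a degree $\delta$ with $r_\delta\neq 0$ and project the relation onto degree $\delta\deg(y)$. This proves the first assertion with $N=M_H$.

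For the finitely generated case, let $M=\langle x_1,\dots,x_m\rangle$ with each $x_i$ homogeneous. Then $M_H$ is generated by the $x_i/1$. Refining the Zorn argument to a basis chosen among the generators gives a finite homogeneous basis $b_1,\dots,b_n$ of $M_H$, where each $b_j$ is one of the $x_i$. Write each $x_i=\sum_j (c_{ij}/s)\,b_j$ with a common denominator $s\in H$. Clearing denominators with homogeneous components, we may take the $c_{ij}$ homogeneous, possibly after replacing $s$ by a homogeneous common denominator. Then $sM\subseteq F:=\bigoplus_j Rb_j$, and $F$ is gr-free over $R$ because the $b_j$ are $R_H$-independent, hence $R$-independent.

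Multiplication by $s$ is injective on $M$ by torsion-freeness. It is homogeneous of degree $\deg(s)$, so composing with a degree shift of $F$ (set $\deg(e_j)=\deg(b_j)\deg(s)^{-1}$) would be needed to make it degree-preserving. That shift is only legitimate in $\langle\Gamma\rangle$, so I expect this bookkeeping to be the main obstacle. Instead, I would keep $M\cong sM\subseteq F$ and regrade $F$ by shifting the degrees of the basis elements, using that $\deg(b_j)\in\Gamma$ and $\deg(s)$ divides the degrees arising from $sM$. If that fails, I would state the embedding as homogeneous of degree $\deg(s)$, which suffices for later use.
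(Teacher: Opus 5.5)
Your proposal is correct in substance once you commit to the fallback in your last paragraph. For the first assertion it takes a different route from the paper.

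\textbf{First assertion.} The paper embeds $M$ in a gr-injective module $E$ via Theorem~\ref{injectivehull}, passes to the torsion-free gr-divisible quotient $E/T(E)$, asserts that this is an $R_H$-module, and quotes an external lemma saying such a module is gr-free over $R_H$. You instead localize, $M\hookrightarrow M_H$, and prove directly by Zorn's lemma that every graded $R_H$-module is gr-free. This uses only that nonzero homogeneous elements of $R_H$ are units. Your component extraction is fine: $\langle\Gamma\rangle$ is a group, and $r_\delta y\neq 0$ because $r_\delta$ is a unit.

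Your route is more elementary and self-contained. It is also more robust when $\Gamma$ is not a group. Gr-divisibility only allows division by $s\in H$ into degrees lying in $\Gamma$, so $E/T(E)$ need not be an $R_H$-module. For example, take $\Gamma=\mathbb{N}_0$ and $R=k[X]$: the module $k[X]$ satisfies the criterion of Theorem~\ref{baer}, hence is gr-injective in the $\mathbb{N}_0$-graded category, yet $X$ is not invertible on it. By contrast, $M_H$ is an $R_H$-module with its $\langle\Gamma\rangle$-grading by construction.

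\textbf{Finitely generated case.} Here you and the paper do the same thing: clear a homogeneous common denominator $s$. The paper's module $N=\sum_j R\,s^{-1}v_j$ has basis degrees $\deg(v_j)\deg(s)^{-1}\in\langle\Gamma\rangle$. In other words, it is your $F$ regraded, and the paper passes over this silently.

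Your hope of regrading $F$ inside $\Gamma$ cannot succeed in general. It would need $\deg(s)$ to divide every $\deg(b_j)$ in $\Gamma$, and in fact a degree-preserving embedding into a $\Gamma$-graded gr-free module need not exist. Take $\Gamma=\mathbb{N}_0$, $R=k[X^2,X^3]$ (so $R_1=0$) and $M=k[X]$. A homogeneous $\phi$ into $\bigoplus_j Re_j$ sends $1$ into the $k$-span of the basis vectors of degree $0$, and $X$ into the $k$-span of those of degree $1$. The relation $X^3\phi(1)=X^2\phi(X)$ then forces $\phi(1)=0$. In this example your construction gives $b_1=1$ and $s=X^2$, and the required basis degree would be $-2$.

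So state the conclusion as you suggest. The map $x\mapsto sx$ is an injection, homogeneous of degree $\deg(s)$, into the $\Gamma$-graded gr-free module $\bigoplus_j Rb_j$. Equivalently, it is a homogeneous embedding into a $\langle\Gamma\rangle$-graded gr-free $R$-module.

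This is enough for Theorem~\ref{fg}. The module $sM$ is a finitely generated graded submodule of $\bigoplus_j Rb_j$, hence gr-projective by Theorem~\ref{Albrecht}, hence projective. Since $M\cong sM$ as $R$-modules, $M$ is projective, and therefore gr-projective by Proposition~\ref{proj}.
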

\begin{proof}
Imbed $M$ in a gr-injective left module $E$ by Theorem \ref{injectivehull}. Since $M$ is torsion-free, $M$ is also imbedded in $E/T(E)$, which is torsion-free and gr-divisible. It is easy to see that $E/T(E)$ is a graded left $R_H$-module, hence, is a gr-free $R_H$-module by \cite[Lemma 4]{cs18}.

Assume $M$ is finitely generated and choose a homogeneous basis for the gr-free $R_H$-module $E/T(E)$. Then, each of the homogeneous generators of $M$ is a linear combination of only finitely many elements of the basis. Thus, we may assume $M$ is imbedded in a finitely generated gr-free $R_H$-module $V$. Let $\{v_1,\ldots,v_m\}$ be a homogeneous basis of $V$ and $\{x_1,\ldots,x_n\}$ be a generating set of homogeneous elements of $M$. Then each $x_i=\sum_{j}(r_{ij}/s_{ij})v_j$, where $r_{ij}\in R$ and $s_{ij}\in H$. If $s$ is the product of all $s_{ij}$, then $\{s^{-1}v_1,\ldots,s^{-1}v_m\}$, is independent in $V$, and the graded $R$-submodule $N$ of $V$ generated by this set is gr-free. Clearly $M$ is imbedded in $N$.
\end{proof}

In the following result, we give a new characterization of graded-Pr\"{u}fer domains in terms of modules.

\begin{theorem}\label{fg}
Assume that $\Gamma$ is a torsionless commutative cancelation monoid. A commutative graded integral domain $R=\bigoplus_{\alpha \in \Gamma}R_{\alpha}$ is a graded-Pr\"{u}fer domain if and only if every finitely generated torsion-free graded $R$-module is gr-projective.
\end{theorem}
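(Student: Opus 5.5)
We prove the two implications separately, combining Lemma \ref{imbed} with Theorem \ref{Albrecht}.

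\emph{Sufficiency.} Assume every finitely generated torsion-free graded $R$-module is gr-projective. Let $I$ be a nonzero finitely generated homogeneous ideal of $R$. As a submodule of the domain $R$, it is a finitely generated graded $R$-module that is torsion-free. By hypothesis $I$ is gr-projective, hence projective by Proposition \ref{proj}. A nonzero projective ideal of a domain is invertible, so $R$ is a graded-Pr\"{u}fer domain. This direction is immediate.

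\emph{Necessity.} Assume $R$ is a graded-Pr\"{u}fer domain, so $R$ is graded semihereditary. Let $M$ be a finitely generated torsion-free graded $R$-module. By the second part of Lemma \ref{imbed}, there is a homogeneous embedding of $M$ into a finitely generated gr-free $R$-module $N$. The image of $M$ is then a finitely generated graded submodule of the gr-projective module $N$. Theorem \ref{Albrecht} therefore shows that the image is gr-projective, and so $M$ is gr-projective.

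\emph{Main obstacle.} The only delicate point is the claim that Lemma \ref{imbed} really produces a \emph{homogeneous} embedding, so that $M$ becomes isomorphic as a graded module to a graded submodule of $N$. The argument runs along the chain $M\subseteq E\to E/T(E)$. Here $E$ is the gr-injective envelope from Theorem \ref{injectivehull}, and $T(E)$ is a graded submodule by the preceding lemma on torsion submodules. The composite $M\to E/T(E)$ is homogeneous, and it is injective because $M\cap T(E)=T(M)=0$. The gr-freeness of $E/T(E)$ as an $R_H$-module is quoted from \cite{cs18}. Its basis can be chosen homogeneous, and the rescaled elements $s^{-1}v_j$ are homogeneous since $s\in H$. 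Hence $N$ is a gr-free graded $R$-submodule containing the image of $M$. I would check these compatibilities explicitly. I would also check that $s^{-1}v_1,\ldots,s^{-1}v_m$ are $R$-independent, which follows from their $R_H$-independence.
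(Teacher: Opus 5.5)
Your argument is the paper's own: the converse holds because finitely generated homogeneous ideals are torsion-free, and the forward direction uses Lemma \ref{imbed} followed by Theorem \ref{Albrecht}. However, the step you single out as delicate is a genuine gap when $\Gamma$ is not a group, and the check you sketch (which is the proof of Lemma \ref{imbed}) does not close it.

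\emph{First}, $E/T(E)$ need not be an $R_H$-module. Take $R=k[X]$ with the $\mathbb{N}_0$-grading. Then $R$ itself is gr-injective by Theorem \ref{baer}: a degree-$\alpha$ map $(X^n)\to R$ sends $X^n$ to some $cX^{n+\alpha}$, so take $m_\alpha=cX^{\alpha}$. It is also torsion-free, yet multiplication by $X\in H$ is not surjective on it.

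\emph{Second}, the elements $s^{-1}v_j$ have degrees $\deg(v_j)/\deg(s)\in\langle\Gamma\rangle$, which need not lie in $\Gamma$. So $N$ is only a $\langle\Gamma\rangle$-graded gr-free module, and is not a legitimate input for Theorem \ref{Albrecht} over $\Gamma$. No cleverer choice of basis helps. Take $R=k[X^2,X^3]$ with the $\mathbb{N}_0$-grading and $M=k[X]$. A homogeneous $\phi:M\to\bigoplus_j Re_j$ with $\deg(e_j)\in\mathbb{N}_0$ must send $1$ into the span of the $e_j$ of degree $0$, and $X$ into the span of those of degree $1$ (as $R_1=0$). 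Then $X^3\phi(1)=X^2\phi(X)$ forces $\phi(1)=0$. So the second part of Lemma \ref{imbed} fails as a statement about $\Gamma$-graded modules.

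The repair is short. Embed $M$ homogeneously in $M_H$. This is a finitely generated $\langle\Gamma\rangle$-graded $R_H$-module, and it is gr-free since every nonzero homogeneous element of $R_H$ is a unit. Clearing denominators as you describe then places $M$ inside a finitely generated $\langle\Gamma\rangle$-graded gr-free $R$-module. Now regard $R$ as $\langle\Gamma\rangle$-graded, with $R_\gamma=0$ for $\gamma\notin\Gamma$. Its homogeneous ideals are unchanged, so it is still graded semihereditary. Theorem \ref{Albrecht} for the group $\langle\Gamma\rangle$ then makes $M$ gr-projective in the $\langle\Gamma\rangle$-graded sense. By Proposition \ref{proj} this just says $M$ is projective, hence gr-projective for the original $\Gamma$-grading. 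When $\Gamma$ is a group, your argument works as written.
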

\begin{proof}
Assume that $R$ is a graded-Pr\"{u}fer domain and $M$ is a finitely generated torsion-free graded $R$-module. Then by Lemma \ref{imbed}, $M$ can be imbedded in a finitely generated gr-free $R$-module. It follows from Theorem \ref{Albrecht} that $M$ is gr-projective. Conversely, every homogeneous ideal of $R$ is torsion-free, so every finitely generated homogeneous ideal of $R$ is gr-projective.
\end{proof}

\begin{corollary}\label{f}
Assume that $\Gamma$ is a torsionless commutative cancelation monoid, and that $R=\bigoplus_{\alpha \in \Gamma}R_{\alpha}$ is a graded-Pr\"{u}fer domain. Then every graded $R$-module is gr-flat if and only if it is torsion-free.
\end{corollary}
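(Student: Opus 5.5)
We prove the two implications separately. By Remark \ref{directflat}(1), a graded module is gr-flat as soon as all of its finitely generated graded submodules are gr-flat. So the main direction reduces to the finitely generated case, where Theorem \ref{fg} applies. The reverse direction (gr-flat implies torsion-free) follows because torsion is detected by tensoring a short exact sequence of graded $R$-modules with the module.

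\emph{Torsion-free implies gr-flat.} Let $M$ be a torsion-free graded $R$-module and let $N$ be a finitely generated graded submodule of $M$. Then $N$ is torsion-free, since $T(N)\subseteq T(M)=0$. By Theorem \ref{fg}, $N$ is gr-projective. By Proposition \ref{proj} it is a graded direct summand of a gr-free module, and hence it is flat, so gr-flat. Remark \ref{directflat}(1) then expresses $M$ as the direct limit of its finitely generated graded submodules. Lemma \ref{flat} shows $M$ is gr-flat.

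\emph{Gr-flat implies torsion-free.} Let $M$ be gr-flat. By Corollary \ref{lazard2}, $M$ is flat as an ungraded $R$-module. Take $0\neq x\in T(M)$. By the lemma showing $T(M)$ is graded, we may take $x$ homogeneous. Since $R$ is a domain, we may take $rx=0$ with $r$ homogeneous and nonzero: the argument in that lemma gives $r_{\alpha_1}x_{\beta_1}=0$ with $r_{\alpha_1}\neq 0$.

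Multiplication by $r$ is an injective homomorphism $R\to R$, because $R$ is a domain. It is homogeneous of degree $\deg r$. To make it a homogeneous map of degree $\varepsilon$ in the sense of the definition, use the shifted gr-free module $Re$ with $\deg e=\deg r$, mapping $e\mapsto r$. Flatness, or directly gr-flatness, says the tensored map $M\cong R\otimes_R M\to R\otimes_R M\cong M$ is injective. That map is $m\mapsto rm$. Hence $rx=0$ forces $x=0$, a contradiction, so $M$ is torsion-free.

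The only delicate point is the bookkeeping of degree shifts needed to phrase multiplication by a homogeneous $r$ as a degree-preserving monomorphism. Since Corollary \ref{lazard2} already gives ungraded flatness, we can bypass this and use ordinary flatness. The substantive content of the result lies in Theorem \ref{fg}.
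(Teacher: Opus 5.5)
Your proof is correct and follows the paper's argument. The torsion-free $\Rightarrow$ gr-flat direction is the paper's reduction via Remark \ref{directflat}(1) to finitely generated graded submodules, which are gr-projective by Theorem \ref{fg}, and the converse also goes through Corollary \ref{lazard2} to ordinary flatness. The one difference is that you prove directly, by tensoring the injective map given by multiplication by $r$ on $R$ with $M$, the standard fact that flat modules over a domain are torsion-free, which the paper cites from Rotman. Your reduction to homogeneous $x$ and $r$ is harmless but unnecessary once you use ungraded flatness.
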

\begin{proof}
Suppose that $F$ is gr-flat. Thus $F$ is flat by Corollary \ref{lazard2}, and so is torsion-free by \cite[Proposition 3.49]{ro09}. Conversely, assume that $F$ is a torsion-free graded $R$-module. By Remark \ref{directflat}, it is enough to show that every finitely generated graded submodule of $F$ is gr-flat. Since $R$ is a graded-Pr\"{u}fer domain, by Theorem \ref{fg}, every such finitely generated graded submodule is gr-projective, hence is gr-flat.
\end{proof}

Let $\{M_{j}\}_{j\in J}$ be a family of $\Gamma$-graded left $R$-modules. The direct product $^*\Pi_{j}M_j$ exists in the category of $\Gamma$-graded left $R$-modules with $(^*\Pi_{j}M_j)_{\alpha}=\Pi_{j}(M_j)_{\alpha}$ for all $\alpha\in\Gamma$. It is clear that $^*\Pi_{j}M_j=\Pi_{j}M_j$ whenever $\Gamma$ is a finite group. In the following proposition, we do not assume that $\Gamma$ is torsionless.

\begin{proposition}
Assume that $\Gamma$ is a finite group, $R=\bigoplus_{\alpha \in \Gamma}R_{\alpha}$ is a commutative $\Gamma$-graded integral domain and that every torsion-free graded $R$-module is gr-flat. Then $R$ is a graded-Pr\"{u}fer domain.
\end{proposition}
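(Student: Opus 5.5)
The plan is to reduce to the criterion already used in Proposition \ref{coh-semi}: a finitely generated homogeneous ideal which is both gr-flat and finitely presented is gr-projective, by Remark \ref{directflat}(2). One half is immediate. Every homogeneous ideal $I$ of the domain $R$ is torsion-free, so by hypothesis it is gr-flat. The real work is showing that finitely generated homogeneous ideals are finitely presented, i.e.\ that $R$ is coherent. I would get coherence from Chase's theorem on direct products of flat modules, and this is exactly where the hypothesis that $\Gamma$ is a finite group enters.

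\textbf{Step 1.} Let $J$ be an arbitrary index set and consider the graded direct product ${}^*\Pi_{j\in J}R$. Since $\Gamma$ is a finite group, the remark preceding the proposition gives ${}^*\Pi_{j}R=\Pi_{j}R=R^J$. Thus $R^J$ is itself a $\Gamma$-graded $R$-module, with $(R^J)_\alpha=\Pi_j R_\alpha$.

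\textbf{Step 2.} The module $R^J$ is torsion-free, because $R$ is a domain and $r\cdot(x_j)_j=0$ with $r\neq 0$ forces every $x_j=0$. By hypothesis $R^J$ is then gr-flat, hence flat by Corollary \ref{lazard2}. Since this holds for every $J$, Chase's theorem shows that $R$ is coherent in the ungraded sense. Chase's theorem says that $R$ is coherent if and only if every direct product of copies of $R$ is flat; I will cite it rather than reprove it.

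\textbf{Step 3.} Let $I$ be a nonzero finitely generated homogeneous ideal. By coherence, $I$ is finitely presented as an $R$-module. The Schanuel argument given after Proposition \ref{proj} then shows that $I$ is finitely presented in the category of $\Gamma$-graded modules. By Step 2's observation, $I$ is also torsion-free and hence gr-flat. Remark \ref{directflat}(2) therefore gives that $I$ is gr-projective, equivalently projective, equivalently invertible. So $R$ is graded-Pr\"ufer.

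The main obstacle is Step 2, namely invoking Chase's theorem. Two points need checking there. First, the hypothesis must really produce flatness of arbitrary products of copies of $R$. This is where we need ${}^*\Pi=\Pi$: for an infinite grading group the graded product is smaller than the ordinary product, and the argument would break. Second, we only need the direction ``products flat $\Rightarrow$ coherent.'' If one prefers to avoid Chase's theorem, the fallback is its standard proof. Take a finitely generated ideal $I=(a_1,\dots,a_n)$ with relation module $K\subseteq R^n$. Use flatness of $R^J$ to see that $R^J\otimes_R I\to I^J$ is injective, which forces $K$ to be finitely generated.
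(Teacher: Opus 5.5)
Your proposal is correct and follows essentially the same route as the paper: the paper also uses ${}^*\Pi_j R=\Pi_j R$ for finite $\Gamma$, torsion-freeness of $R^J$, gr-flat $\Rightarrow$ flat (Corollary \ref{lazard2}), and Chase's theorem (cited there from Lam) to get coherence. It then concludes via Proposition \ref{coh-semi}, whose converse direction amounts to your direct appeal to Remark \ref{directflat}(2); your Schanuel step is harmless but unnecessary, since that remark only needs ordinary finite presentation.
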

\begin{proof}
Let $J$ be an arbitrary set. Since $\Gamma$ is finite, the direct product $R^J:=\Pi_{j\in J}R$ is a $\Gamma$-graded $R$-module by the paragraph above. Since $R^J$ is torsion-free, by the hypothesis, $R^J$ is gr-flat, hence is flat. It follows from \cite[Theorem 4.47]{la98} that $R$ is coherent. Let now $I$ be a homogeneous ideal of $R$. Then $I$ is torsion-free and thus is gr-flat by the hypothesis. Therefore $R$ is a graded-Pr\"{u}fer domain by Proposition \ref{coh-semi}.
\end{proof}

\noindent {\bf Acknowledgement.}
The authors would like to thank the referee for careful reading of the manuscript that improved the presentation of the paper.

\end{document}